\def\NAT@def@citea{\def\@citea{\NAT@separator}}
\theoremstyle{plain}
\newtheorem{theorem}{Theorem}[section]
\newtheorem{lemma}[theorem]{Lemma}
\newtheorem{proposition}[theorem]{Proposition}
\theoremstyle{definition}
\newtheorem{definition}[theorem]{Definition}
\newtheorem{example}[theorem]{Example}
\theoremstyle{remark}
\newtheorem{remark}{Remark}
\newcommand{\R}{\mathbb R}
\newcommand{\N}{\mathbb N}
\newcommand{\X}{\mathbb X}
\newcommand{\Y}{\mathbb Y}
\newcommand{\Uball}{{\mathbb B}}
\newcommand{\Usfer}{{\mathbb S}}
\newcommand{\dom}{{\rm dom}\, }
\newcommand{\grph}{{\rm gph}\,}
\newcommand{\nullv}{\mathbf{0}}
\newcommand{\inte}{{\rm int}\, }
\newcommand{\Lin}{\mathcal{L}}
\newcommand{\GE}{{\rm GE}\,}
\newcommand{\VOP}{{\rm VOP}\,}
\newcommand{\ball}[2]{{\rm B}(#1, #2)}
\newcommand{\Solv}[1]{{\mathcal Sol}(#1)}
\newcommand{\lop}{{\rm sur}\,}
\newcommand{\FDer}{{\rm D}}
\newcommand{\loclop}[2]{{\rm sur}(#1;#2)}
\newcommand{\lip}[2]{{\rm lip}(#1;#2)}
\newcommand{\dist}[2]{{\rm dist}\left(#1,#2\right)}
\newcommand{\exc}[2]{{\rm exc}(#1,#2)}
\newcommand{\incr}[2]{{\rm inc}(#1;#2)}
\newcommand{\Haus}[2]{{\rm Haus}(#1,#2)}
\newcommand{\IESol}[2]{{\rm IE}(#1;#2)}
\begin{document}

\articletype{ }

\title{On some generalized equations with metrically $C$-increasing mappings:
solvability and error bounds with applications to optimization}

\author{
\name{A. Uderzo\textsuperscript{a}\thanks{CONTACT A. Uderzo Email: amos.uderzo@unimib.it}}
\affil{\textsuperscript{a} Department of Mathematics and Applications, Universit\`a
di Milano-Bicocca, Milano, Italy}
}

\maketitle

\begin{abstract}
Generalized equations are problems emerging in contexts of modern
variational analysis as an adequate formalism to treat such issues as
constraint systems, optimality and equilibrium conditions, variational
inequalities, differential inclusions.
The present paper contains a study on solvability and error
bounds for generalized equations of the form $F(x)\subseteq C$,
where $F$ is a given set-valued mapping and $C$ is a closed, convex cone.
A property called metric $C$-increase, matching the metric behaviour of $F$ with the partial
order associated with $C$, is singled out, which ensures solution
existence and error bound estimates in terms of problem data.
Applications to the exact penalization of optimization problems
with constraint systems, defined by the above class of generalized
equations, and to the existence of ideal efficient solutions in vector
optimization are proposed.
\end{abstract}

\begin{keywords}
Generalized equations; metrically $C$-increasing mapping; openness
at a linear rate; solvability; error bound; vector optimization;
exact penalization
\end{keywords}

\section{Introduction}

A feature distinguishing modern variational analysis is the
set-oriented approach to traditional subjects of study.
The language in which many fundamental results are formulated as well as
the comprehensive apparatus of notions and constructions
lying at the core of its theory should make this statement
evident.
Reasons for such a feature can be found in developments of the
last half-century in such areas as optimization and optimal control.
There, with the aim of addressing a broad spectrum of extremum
problems arising in applications, the need to treat
situations falling out from the classical analysis emerged
urgently. As a result, traditional categories of the mathematical
thought such as vectors, equations and functions now share
the center of the stage with new entries, like sets, inequalities,
and multifunctions, considered per se worthwhile of a dedicated
calculus and investigations, not less than the former ones.
Generalized equations, whose study was initiated by S.M.
Robinson (see \cite{Robi79,Robi80}), are problems that seem to be paradigmatic
of this trend. Involving multifunctions and sets in their very structure,
they are powerful enough to subsume standard (equality/inequality)
constraint systems as well as other relations and/or geometric
constraints, various optimality and equilibrium conditions
(often expressed in form of inclusion or inequalities),
variational inequalities and complementarity problems.
So, the study of all the above topics received an effective impulse from
advances in the theory of generalized equations.
Among the central questions in this branch of variational analysis
are solvability and solution stability, error bound estimates,
as well as the sensitivity analysis of solution sets, often leading
to implicit multifunction theorems in the case of parameterized
generalized equations (see, among others, \cite{Robi80}, \cite[Ch. 5]{BonSha00},
\cite[Ch. 5]{BorZhu05}, \cite[Ch. 2 and 4]{DonRoc14},
\cite[Ch. 4.4]{Mord06}).

In this paper, the main subjects of study are solvability and error
bounds for the following type of generalized equations
$$
   \hbox{find $x\in X$ such that $F(x)\subseteq C$},
   \leqno (\GE)
$$
where $F:X\rightrightarrows\Y$ is a given set-valued mapping and
$C\subseteq\Y$ is a nonempty closed and convex subset of a normed space.
This means the study of conditions on the problem data $F$ and $C$,
under which the related solution set, i.e.
$$
   \Solv{F,C}=\{x\in\ X:\ F(x)\subseteq C\},
$$
is nonempty and certain inequalities measuring the distance from
$\Solv{F,C}$ hold true. Whereas a well-developed theory exists for
generalized equations of the form
\begin{equation}   \label{in:alterge}
   \nullv\in f(x)+G(x),
\end{equation}
with $\nullv$ being the null element of $\Y$, $f:X\longrightarrow\Y$
and $G:X\rightrightarrows\Y$ being given data, the case of generalized equations
$(\GE)$ attracted so far a minor interest. Yet, a format like $(\GE)$
emerges in various contexts of optimization theory, as illustrated by the
next example. Besides, in the particular case in which $F$ in $(\GE)$
is a single-valued mapping and it is $G\equiv -C$ in $(\ref{in:alterge})$,
then $(\GE)$ and $(\ref{in:alterge})$ collapse to the same problem.

\begin{example}      \label{ex:GEspecexamples}
(i) Let $f:X\longrightarrow\Y$ be a function taking values in a
vector space $\Y$ partially ordered by its (positive) cone
$C$ and let $R\subseteq X$ be a nonempty set.
Recall that $\bar x\in R$ is said to be an ideally $C$-efficient
solution for the related vector optimization problem
$$
  C\hbox{-}\min f(x) \quad\hbox{ subject to }\qquad x\in R,
  \leqno (\VOP)
$$
provided that
$$
   f(R)\subseteq f(\bar x)+C.
$$
Thus, introducing the set-valued mapping $F:X\rightrightarrows\Y$
defined by $F(x)=f(R)-f(x)$, one gets that the set of all ideally
$C$-efficient solutions coincides with $\Solv{F,C}$. It is worth
recalling that any ideal $C$-efficient solution is, in particular,
also $C$-efficient (see \cite{Jahn04}).

(ii) Essentially, semi-infinite programs are optimization problems,
whose variables lie in a finite-dimensional space, that are subject
to infinitely many constrains (see \cite{HetKor93}). Using $T$ as
an (infinite) index set, a semi-infinite programming problem can be
formalized as follows
$$
    \min_{x\in S}\varphi(x) \quad\hbox{ subject to }\quad
    g(t,x)\le 0,\ \forall t\in T,
$$
where $S\in\R^n$ represents a geometric constraint, $\varphi:\R^n
\longrightarrow\R$ and $g:T\times\R^n\longrightarrow\R$ are
given functions. It is readily
seen that, setting $F(x)=\{g(t,x):\ t\in T\}=g(T,x)$ and $C=
(-\infty,0]$, the feasible region of such kind of problems becomes
$\Solv{F,C}$.

(iii) Consider  a standard constrained convex optimization
problem, namely
$$
  \min_{x\in S}\varphi(x) \quad\hbox{ subject to }\quad
  x\in R,
$$
where $\varphi:\X\longrightarrow\R$ is a convex smooth
functional defined on a normed space $\X$ and $R\subset\X$
is a nonempty, closed and convex set. It is well known
that the (global) optimality of an element $\bar x\in R$ is
characterized by the variational inequality
$$
  \langle\nabla\varphi(\bar x),x-\bar x\rangle\ge 0,\quad\forall
  x\in R,
$$
where $\nabla\varphi(\bar x)$ denotes the G\^ateaux derivative
of $\varphi$ at $\bar x$ (see, for instance, \cite[Proposition 5.1.1]{Shir07}).
Thus, if setting $F(x)=\langle\nabla
\varphi(x),R-x\rangle$ and $C=[0,+\infty)$, $\Solv{F,C}$
coincides with the set of all global solutions to the
above optimization problem.

(iv) In mathematical economics, a production process consists
of the transformation of production factors (scarce resources),
or inputs, into products (goods, services), or outputs.
A production technology is a description of the relationships between
inputs and outputs (see, for instance, \cite{Fare88,Rasm13}).
Such a description may be quantitatively formalized
by a set-valued mapping $F:\R^n\rightrightarrows\R^m$ associating
with each output $x\in\R^n$ the set consisting of all inputs
$y\in\R^m$ needed to produce $x$ (it is reasonable to assume that
the same output can be obtained by means of different combinations
of inputs). Given a cone $C$, any condition like $(\GE)$ can be interpreted
as a constraint on the production technology, corresponding to
specific requirements on the input employment.
\end{example}

Clearly, the study of such issues as solvability and error bounds for
$(\GE)$ can be performed through several approaches. The present
paper proposes an approach, which relies on a property for set-valued
mappings here introduced, called metric $C$-increase. This notion
captures a behaviour of set-valued mappings that combines the partial
order induced by the cone $C$ with metric variations of the given mappings.
Roughly speaking, such property works as a generalization to a
set-valued setting of a decrease principle. In synergy with the Ekeland
variational principle, metric $C$-increase turns out to guarantee
solution existence results for $(\GE)$ as well as quantitative
estimates of the distance from the solution set.
It is worth mentioning that a different approach to the study of error
bound for the above $(\GE)$ has been considered in \cite{Cast98}.
According to such an approach, which rests upon techniques of convex analysis,
the set-valued mapping $F$ is required to satisfy generalized
convexity assumptions, the cone $C$ is required to have nonempty
interior and the solution set to be nonempty. None of these
assumptions will be made according the approach presented here.

The arrangement of the contents of the paper in the subsequent sections
is as follows. In Section 2 some technical preliminaries, adequated
to the analysis to be carried out, are recalled. Essentially, they
deal with translations of set enlargements, with `additive-like'
properties of the excess function and with semicontinuity properties
of set-valued mappings.
In Section 3 the notion of metric $C$-increase for set-valued mappings
is introduced, both in its global and local form, along with the
related exact bounds. Some examples are provided and some connections
with well-known properties in variational analysis are explored.
Section 4 is devoted to exposing the main results of the paper,
which refer to solvability and error bounds for generalized equations
of the form $(\GE)$.
In Section 5 some applications to optimization are discussed:
the first one refers to the existence of ideal efficient solutions
to vector optimization problems; a further application concerns
the exact penalization of general
optimization problems, whose constraint systems are defined by
generalized equations of the form $(\GE)$.


\section{Technical preliminaries}

The notation in use throughout the paper is standard. $\N$ and $\R$
denote the natural and the real number set, respectively. $\R^m_+$
denotes the nonnegative orthant in the space $\R^m$.
Whenever $x$ is an element of a metric space $(X,d)$ and
$r$ is a positive real, $\ball{x}{r}=\{z\in X:\ d(z,x)\le r\}$ denotes
the closed ball with center $x$ and radius $r$.
Given a subset $S$ of a metric
space, by $\inte S$ its topological interior is denoted.
By $\dist{x}{S}=\inf_{z\in S}d(z,x)$ the distance of $x$ from a subset $S\subseteq
X$ is denoted, with the convention that $\dist{x}{\varnothing}
=+\infty$. The $r$-enlargement of a set $S\subseteq X$ is indicated by
$\ball{S}{r}=\{x\in X:\ \dist{x}{S}\le r\}$. Given a pair of subsets
$S_1,\, S_2\subseteq X$, the symbol $\exc{S_1}{S_2}=\sup_{s\in S_1}
\dist{s}{S_2}$ denotes the excess of $S_1$ over $S_2$.
The null vector in a normed space is indicated by $\nullv$. Thus, $\Uball
=\ball{\nullv}{1}$ and $\Usfer=\{x\in\Uball:\ \|x\|=1\}$ stand for
the unit ball and the unit sphere in a normed space, respectively.
Whenever $F:X\rightrightarrows\Y$ is a set-valued mapping, $\grph F$
and $\dom F$ denote the graph and the domain of $F$, respectively.
All the set-valued mappings appearing in the paper will be supposed
to take closed values.
Throughout the text, the acronyms l.s.c. and u.s.c. stand for
lower semicontinuous and upper semicontinuous, respectively.
Further more specific notations will be introduced contextually to their use.

Throughout the current section, $(\Y,\|\cdot\|)$ denotes a real normed space.
For the reader's convenience, in the next remark some basic facts concerning
the interaction between enlargements of a set, translations and the excess
function, which will be employed in the sequel, are collected. Their proof
can be derived as a direct consequence of the definition of the involved
objects.

\begin{remark}     \label{rem:enexdist}
Let $S\subset\Y$ be a nonempty subset and let $r>0$.

(i) It holds
$$
  \bigcup_{y\in S}\ball{y}{r}\subseteq\ball{S}{r}
  \subseteq\bigcap_{\epsilon>0}\bigcup_{y\in S}\ball{y}{r+\epsilon}.
$$
If, in particular, $S$ is a closed subset of a finite-dimensional Euclidean
space, then
$$
   \bigcup_{y\in S}\ball{y}{r}=\ball{S}{r}.
$$

(ii) Let $y\in\Y$. Then, it holds
$$
  \ball{y}{r}+C\subseteq\ball{y+C}{r}\subseteq \bigcap_{\epsilon>0}
  \left(\ball{y}{r+\epsilon}+C\right).
$$
If, in particular, $C$ is a closed subset of a finite-dimensional Euclidean
space, then
$$
  \ball{y+C}{r}=\ball{y}{r}+C.
$$

(iii) Let $S$ be a nonempty subset. It holds
$$
  \ball{S}{r}+C\subseteq\ball{S+C}{r}\subseteq\bigcap_{\epsilon>0}
  \ball{S+C}{r+\epsilon}.
$$
If, in particular, $S$ and $C$ are nonempty subsets of a finite-dimensional
Euclidean space such that $S+C$ is closed, then
$$
  \ball{S}{r}+C=\ball{S+C}{r}.
$$

(iv) Let  $C\subset\Y$ be a closed, convex cone. Then, it results in
$$
   \exc{S}{C}=\exc{S+C}{C}.
$$

(v) Let $S_1$ and $S_2$ be nonempty subsets of $\Y$ and let
$r_1,\, r_2>0$. Then,
$$
  \ball{S_1}{r_1}+\ball{S_2}{r_2}\subseteq\bigcap_{\epsilon>0}
  \ball{S_1+S_2}{r_1+r_2+\epsilon}.
$$
If, in particular, $\Y$ is a finite-dimensional Euclidean space, then
$$
  \ball{S_1}{r_1}+\ball{S_2}{r_2}\subseteq\ball{S_1+S_2}{r_1+r_2}.
$$

(vi) If $C\subset\Y$ is a closed, convex cone, with
$\bar c\in C$ and $y\in \Y\backslash C$, and $\alpha>0$, then
\begin{eqnarray}    \label{in:distCalpha}
  \dist{y+\alpha(y-\bar c)}{C}\ge(1+\alpha)\dist{y}{C}.
\end{eqnarray}
Indeed, from
$$
   \dist {y+\alpha(y-\bar c)}{C}=\inf_{c\in C}
   \|(1+\alpha)y-\alpha\bar c-c\|,
$$
as it is $\alpha\bar c+C\subseteq C$, it follows
\begin{eqnarray*}
   \inf_{c\in C}\|(1+\alpha)y-\alpha\bar c-c\| &\ge&
   \inf_{c\in C}\|(1+\alpha)y-c\|    \\
   &=& \inf_{c\in C}\|(1+\alpha)y-c(1+\alpha)\|
      =(1+\alpha)\dist{y}{C}.
\end{eqnarray*}
\end{remark}

On the basis of some of the facts recalled above, the following
technical lemma, expressing a sort of additivity of the
excess with respect to the radius of balls, can be derived.

\begin{lemma}    \label{lem:excballpoint}
Let  $C\subset\Y$ be a closed, convex cone and
let $y\in\Y\backslash C$. Then, for any $r>0$ it holds
$$
   \exc{\ball{y}{r}}{C}=\dist{y}{C}+r.
$$
\end{lemma}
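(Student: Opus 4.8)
The plan is to prove the identity by establishing the two inequalities separately; throughout write $d=\dist{y}{C}$, which is strictly positive since $C$ is closed and $y\notin C$. The inequality $\exc{\ball{y}{r}}{C}\le d+r$ is the easy direction: the distance function $\dist{\cdot}{C}$ is $1$-Lipschitz, so for every $z\in\ball{y}{r}$ one has $\dist{z}{C}\le\dist{y}{C}+\|z-y\|\le d+r$, and taking the supremum over $\ball{y}{r}$ gives the bound. This half uses nothing about the cone structure of $C$.

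The substance is the reverse inequality, where the cone structure enters through Remark~\ref{rem:enexdist}(vi). The idea is to push $y$ away from $C$ along the direction joining a near-best approximation of $y$ in $C$ to $y$ itself. Fix $\epsilon>0$ and, using that $d$ is an infimum, pick $\bar c_\epsilon\in C$ with $0<\|y-\bar c_\epsilon\|\le d+\epsilon$ (positivity holds because $y\notin C$). Set $\alpha_\epsilon=r/\|y-\bar c_\epsilon\|>0$ and $z_\epsilon=y+\alpha_\epsilon(y-\bar c_\epsilon)$, so that $\|z_\epsilon-y\|=\alpha_\epsilon\|y-\bar c_\epsilon\|=r$ and hence $z_\epsilon\in\ball{y}{r}$. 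Inequality (\ref{in:distCalpha}) of Remark~\ref{rem:enexdist}(vi), applied with $\bar c=\bar c_\epsilon$ and $\alpha=\alpha_\epsilon$, then yields
$$
  \exc{\ball{y}{r}}{C}\ge\dist{z_\epsilon}{C}\ge(1+\alpha_\epsilon)d
  =d+\frac{rd}{\|y-\bar c_\epsilon\|}\ge d+\frac{rd}{d+\epsilon}.
$$
Letting $\epsilon\downarrow 0$ gives $\exc{\ball{y}{r}}{C}\ge d+r$, which together with the first bound proves the equality.

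The only real obstacle lies in the lower bound, and it is the fact that in a general normed space the metric projection of $y$ onto $C$ may fail to exist; this forces the use of $\epsilon$-approximate minimizers $\bar c_\epsilon$ together with a passage to the limit, rather than realizing the excess at a single maximizing point of $\ball{y}{r}$. The choice of $\alpha_\epsilon$ that places $z_\epsilon$ exactly on the sphere of radius $r$ about $y$ is precisely what makes Remark~\ref{rem:enexdist}(vi) deliver the additive increment $r$ in the limit.
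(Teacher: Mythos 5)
Your proof is correct and follows essentially the same route as the paper's: the easy direction via the $1$-Lipschitz property of $\dist{\cdot}{C}$, and the lower bound by choosing an $\epsilon$-approximate best approximation $\bar c_\epsilon\in C$, moving to the point $y+\frac{r}{\|y-\bar c_\epsilon\|}(y-\bar c_\epsilon)$ on the sphere of radius $r$, and invoking inequality $(\ref{in:distCalpha})$ before letting $\epsilon\to 0^+$. The only cosmetic difference is that you use an additive approximation $\|y-\bar c_\epsilon\|\le d+\epsilon$ where the paper uses a multiplicative one, which changes nothing in the limit.
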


\begin{proof}
Let us show first that $\exc{\ball{y}{r}}{C}\le\dist{y}{C}+r$.
Let $v\in r\Uball$. From
$$
  \inf_{c\in C}\|y+v-c\|\le\inf_{c\in C}\|y-c\|+\|v\|,
$$
one readily obtains
$$
  \exc{\ball{y}{r}}{C}=
  \sup_{v\in r\Uball}\inf_{c\in C}\|y+v-c\|\le
  \sup_{v\in r\Uball}[\inf_{c\in C}\|y-c\|+\|v\|]=
  \dist{y}{C}+r.
$$
To prove the converse inequality, fix an arbitrary
$\epsilon>0$. Correspondingly, there exists $c_\epsilon\in C$
such that $\|y-c_\epsilon\|<(1+\epsilon)\dist{y}{C}$
(remember that $\dist{y}{C}>0$). Thus, since
$y\ne c_\epsilon$, by recalling inequality $(\ref{in:distCalpha})$
with $\alpha=\|y-c_\epsilon\|^{-1}r$, one obtains
\begin{eqnarray*}
    \exc{\ball{y}{r}}{C} &=& \sup_{v\in r\Uball}
    \inf_{c\in C}\|y+v-c\|\ge \inf_{c\in C}
    \left\|  y+{y-c_\epsilon\over\|y-c_\epsilon\|}
    \, r-c\right\|  \\
    &\ge& \left(1+{r\over\|y-c_\epsilon\|}\right)
    \dist{y}{C}\ge\dist{y}{C}+{r\over 1+\epsilon}.
\end{eqnarray*}
By letting $\epsilon\to 0^+$, one achieves the inequality
$$
  \exc{\ball{y}{r}}{C}\ge\dist{y}{C}+r,
$$
thereby completing the proof.
\end{proof}

The next lemma, which will be employed in the proof of the main
result of the paper, generalizes the previous relation to
enlargements of closed sets.

\begin{lemma}    \label{lem:exbalset}
Let  $C\subseteq\Y$ be a closed, convex cone and let $S\subseteq\Y$
be a nonempty subset such that $\exc{S}{C}>0$. For any $r>0$ it holds
$$
  \exc{\ball{S}{r}}{C}=\exc{S}{C}+r.
$$
\end{lemma}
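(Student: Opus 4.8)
The plan is to prove the two inequalities $\exc{\ball{S}{r}}{C}\le\exc{S}{C}+r$ and $\exc{\ball{S}{r}}{C}\ge\exc{S}{C}+r$ separately, exploiting the monotonicity of the excess in its first argument together with the single-point identity already established in Lemma \ref{lem:excballpoint}.

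For the upper bound I would take an arbitrary $x\in\ball{S}{r}$ and an arbitrary $\epsilon>0$. Since $\dist{x}{S}\le r$, there exists $s\in S$ with $\|x-s\|\le r+\epsilon$, and the triangle inequality applied to the distance function gives $\dist{x}{C}\le\|x-s\|+\dist{s}{C}\le r+\epsilon+\exc{S}{C}$. Taking the supremum over $x\in\ball{S}{r}$ and then letting $\epsilon\to 0^+$ yields $\exc{\ball{S}{r}}{C}\le\exc{S}{C}+r$.

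For the lower bound, which is the substantive direction, I would use the hypothesis $\exc{S}{C}>0$ to select, for each $\epsilon\in(0,\exc{S}{C})$, a point $s_\epsilon\in S$ with $\dist{s_\epsilon}{C}>\exc{S}{C}-\epsilon>0$; in particular $s_\epsilon\in\Y\backslash C$, so that Lemma \ref{lem:excballpoint} applies and gives $\exc{\ball{s_\epsilon}{r}}{C}=\dist{s_\epsilon}{C}+r$. The key observation is the inclusion $\ball{s_\epsilon}{r}\subseteq\ball{S}{r}$, which holds because any $x$ with $\|x-s_\epsilon\|\le r$ satisfies $\dist{x}{S}\le\|x-s_\epsilon\|\le r$. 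Since the excess is monotone with respect to inclusion in its first argument, this yields $\exc{\ball{S}{r}}{C}\ge\exc{\ball{s_\epsilon}{r}}{C}=\dist{s_\epsilon}{C}+r>\exc{S}{C}-\epsilon+r$, and letting $\epsilon\to 0^+$ completes the lower bound.

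The only delicate point is the role played by the assumption $\exc{S}{C}>0$: it is precisely what guarantees the existence of a near-maximizer $s_\epsilon$ lying outside $C$, so that the preceding lemma, which is valid only for points $y\in\Y\backslash C$, can legitimately be invoked at $s_\epsilon$. Beyond this bookkeeping no real obstacle arises, since both the triangle-inequality estimate for the upper bound and the inclusion $\ball{s_\epsilon}{r}\subseteq\ball{S}{r}$ for the lower bound follow at once from the definitions of enlargement, distance and excess.
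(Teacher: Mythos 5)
Your proof is correct and follows essentially the same route as the paper's: both directions rest on Lemma \ref{lem:excballpoint} (or its easy triangle-inequality half) combined with the inclusions relating $\ball{S}{r}$ to balls centred at points of $S$, with the hypothesis $\exc{S}{C}>0$ serving in both arguments to ensure that Lemma \ref{lem:excballpoint} is only invoked at points outside $C$. The sole cosmetic difference is that for the lower bound you use a single near-maximizer $s_\epsilon\in S\setminus C$ where the paper takes the supremum over all of $S\setminus C$, and for the upper bound you unfold the $1$-Lipschitz estimate for $\dist{\cdot}{C}$ directly instead of citing Remark \ref{rem:enexdist}(i); neither change affects the substance.
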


\begin{proof}
Observe first that
$$
  \exc{S}{C} = \sup_{y\in S}\dist{y}{C}=\sup_{y\in S\backslash C}\dist{y}{C}
  =\exc{S\backslash C}{C}.
$$
Thus, according to Lemma \ref{lem:excballpoint}, in particular one has that
for every $y\in S\backslash C$ it is $\dist{y}{C}=\exc{\ball{y}{r}}{C}-r$.
It follows
\begin{eqnarray*}
   \exc{S}{C} &=& \sup_{y\in S\backslash C}\dist{y}{C}=\sup_{y\in S\backslash C}
    \exc{\ball{y}{r}}{C}-r \\
    &=& \sup_{y\in S\backslash C}\sup_{v\in\ball{y}{r}}\dist{v}{C}-r\le
    \sup_{v\in\ball{S}{r}}\dist{v}{C}-r     \\
    &=&\exc{\ball{S}{r}}{C}-r,
\end{eqnarray*}
where the last inequality holds because, as observed in Remark \ref{rem:enexdist}(i),
it holds
$$
  \bigcup_{y\in S\backslash C}\ball{y}{r}\subseteq\bigcup_{y\in S}\ball{y}{r}
  \subseteq\ball{S}{r}.
$$
Now, in order to establish the converse inequality, fix an arbitrary $\epsilon>0$.
Since as noticed in Remark \ref{rem:enexdist}(i) it is $\ball{S}{r}
\subseteq\cup_{y\in S}\ball{y}{r+\epsilon}$, by applying Lemma
\ref{lem:excballpoint}, one finds
\begin{eqnarray*}
   \exc{\ball{S}{r}}{C} &\le & \exc{\cup_{y\in S}\ball{y}{r+\epsilon}}{C}=
   \sup_{y\in S}\exc{\ball{y}{r+\epsilon}}{C} \\
   &=&\sup_{y\in S}\dist{y}{C}+r+\epsilon
   =\exc{S}{C}+r+\epsilon.
\end{eqnarray*}
The arbitrariness of $\epsilon$ completes the proof.
\end{proof}

Let us point out below a consequence of Lemma \ref{lem:exbalset}
that will be useful in the sequel.

\begin{remark}   \label{rem:notincarr}
Given a closed, convex cone $S\subset\Y$, a nonempty set
$S\subseteq\Y$ and constants $r>0$ and $a>1$, it holds
$$
   \ball{S}{ar}\not\subseteq\ball{S+C}{r}.
$$
Indeed, if assuming that $\ball{S}{ar}\subseteq\ball{S+C}{r}$,
then according to Lemma \ref{lem:exbalset} and Remark
\ref{rem:enexdist}(iv), one would obtain
\begin{eqnarray*}
   \exc{S}{C}+ar &=& \exc{\ball{S}{ar}}{C}\le\exc{\ball{S+C}{r}}{C}   \\
    &=& \exc{S+C}{C}+r=\exc{S}{C}+r,
\end{eqnarray*}
which leads to the evident contradiction $ar\le r$.
\end{remark}

In view of the formulation of the next ancillary result, recall that
a set-valued mapping $F:X\rightrightarrows\Y$ defined on a metric space
is said to be l.s.c. at $\bar x$ if for every open set $O\subseteq\Y$ such that
$F(\bar x)\cap O\ne\varnothing$ there exists $\delta_O>0$ such that
$$
  F(x)\cap O\ne\varnothing,\quad\forall x\in\ball{\bar x}{\delta_O}.
$$
Given a closed convex cone $C\subset\Y$, $F$ is said to be Hausdorff
$C$-u.s.c. at $\bar x$ if for every $\epsilon>0$ there exists
$\delta_\epsilon>0$ such that
$$
  F(x)\subseteq\ball{F(\bar x)+C}{\epsilon}, \quad\forall x\in
  \ball{\bar x}{\delta_\epsilon}.
$$
The next lemma links the above semicontinuity properties of a set-valued
mapping $F$ with the semicontinuity properties of the function
$\phi:X\longrightarrow [0,+\infty)$, defined by
\begin{equation}    \label{def:excfunctFC}
  \phi(x)=\exc{F(x)}{C},
\end{equation}
which is possible to associate with $F$ and $C$. Not surprisingly,
such a function will play a crucial role in the main achievements
in this paper.

\begin{lemma}    \label{lem:s.c.Fphi}
Let $F:X\rightrightarrows\Y$ be a set-valued mapping defined on
a metric space $(X,d)$ and taking values on a normed space $(\Y,\|\cdot\|)$,
and let $C\subset\Y$ be a closed, convex cone.

(i) If $F$ is l.s.c. at $\bar x$, then the function $\phi$ is l.s.c.
at $\bar x$.

(ii) If $F$ is Hausdorff $C$-u.s.c. at $\bar x$, then the function $\phi$
is u.s.c. at $\bar x$.
\end{lemma}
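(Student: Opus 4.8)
The plan is to treat the two semicontinuity statements separately, in each case reducing the claim about the scalar function $\phi$ to the defining property of $F$, by exploiting the continuity of the map $y\mapsto\dist{y}{C}$ for part (i) and the excess estimate of Lemma \ref{lem:exbalset} for part (ii).

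For part (i), I would verify the $\liminf$ characterization of lower semicontinuity directly. Fix any real $\lambda<\phi(\bar x)=\sup_{y\in F(\bar x)}\dist{y}{C}$. By definition of the supremum there is a point $y_0\in F(\bar x)$ with $\dist{y_0}{C}>\lambda$. Since the function $y\mapsto\dist{y}{C}$ is $1$-Lipschitz and hence continuous, the set $O=\{y\in\Y:\ \dist{y}{C}>\lambda\}$ is open, and it meets $F(\bar x)$ because $y_0\in F(\bar x)\cap O$. Invoking the lower semicontinuity of $F$ at $\bar x$ relative to this $O$ produces a radius $\delta_O>0$ such that $F(x)\cap O\ne\varnothing$ for every $x\in\ball{\bar x}{\delta_O}$; picking any $y_x\in F(x)\cap O$ then gives $\phi(x)\ge\dist{y_x}{C}>\lambda$ on the whole ball. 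As $\lambda<\phi(\bar x)$ was arbitrary, this establishes lower semicontinuity of $\phi$ at $\bar x$.

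For part (ii), I would instead bound $\phi(x)$ from above by $\phi(\bar x)+\epsilon$ on a neighborhood of $\bar x$. Fix $\epsilon>0$ and let $\delta_\epsilon>0$ be the radius furnished by the Hausdorff $C$-upper semicontinuity of $F$, so that $F(x)\subseteq\ball{F(\bar x)+C}{\epsilon}$ for all $x\in\ball{\bar x}{\delta_\epsilon}$. Monotonicity of the excess with respect to set inclusion then yields $\phi(x)=\exc{F(x)}{C}\le\exc{\ball{F(\bar x)+C}{\epsilon}}{C}$. The key is now to evaluate the right-hand side, which is exactly the content of Lemma \ref{lem:exbalset} applied to $S=F(\bar x)+C$: using Remark \ref{rem:enexdist}(iv) one has $\exc{S}{C}=\exc{F(\bar x)+C}{C}=\exc{F(\bar x)}{C}=\phi(\bar x)$, whence $\exc{\ball{F(\bar x)+C}{\epsilon}}{C}=\phi(\bar x)+\epsilon$, and therefore $\phi(x)\le\phi(\bar x)+\epsilon$ throughout $\ball{\bar x}{\delta_\epsilon}$.

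The one genuine obstacle is that Lemma \ref{lem:exbalset} carries the standing hypothesis $\exc{S}{C}>0$, which fails precisely when $\phi(\bar x)=0$. I would dispose of this degenerate case by hand: $\phi(\bar x)=0$ forces $F(\bar x)\subseteq C$, hence $F(\bar x)+C=C$, and every point of $\ball{C}{\epsilon}$ lies within distance $\epsilon$ of $C$, so $\exc{\ball{F(\bar x)+C}{\epsilon}}{C}=\exc{\ball{C}{\epsilon}}{C}\le\epsilon=\phi(\bar x)+\epsilon$, giving the same estimate. Letting $\epsilon\to 0^+$ in either case shows $\limsup_{x\to\bar x}\phi(x)\le\phi(\bar x)$, that is, upper semicontinuity of $\phi$ at $\bar x$.
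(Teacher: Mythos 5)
Your proof is correct and follows essentially the same route as the paper's: part (i) exploits the lower semicontinuity of $F$ against an open set on which $\dist{\cdot}{C}$ exceeds a value close to $\phi(\bar x)$ (you use the superlevel set $\{y\in\Y:\ \dist{y}{C}>\lambda\}$ where the paper uses a small ball around a near-maximizer of $\dist{\cdot}{C}$ on $F(\bar x)$ together with the Lipschitz property of the distance function --- a cosmetic difference), and part (ii) combines monotonicity of the excess under inclusion with Lemma \ref{lem:exbalset} and Remark \ref{rem:enexdist}(iv) exactly as the paper does. Your explicit handling of the degenerate case $\phi(\bar x)=0$, where the hypothesis $\exc{S}{C}>0$ of Lemma \ref{lem:exbalset} fails, addresses a point the paper's own proof passes over in silence; the only slip there is that you claim $F(\bar x)+C=C$, whereas in general one only has the inclusion $F(\bar x)+C\subseteq C$, but that inclusion already yields $\exc{\ball{F(\bar x)+C}{\epsilon}}{C}\le\exc{\ball{C}{\epsilon}}{C}\le\epsilon$, so your estimate stands.
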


\begin{proof}
(i) Fix $\bar x\in X$. If $F(\bar x)=\varnothing$, then $\phi(\bar x)=
\sup_{y\in\varnothing}\dist{y}{C}=-\infty$. Thus, for any
sequence $(x_n)_{n\in\N}$ in $X$, with $x_n\longrightarrow\bar x$
as $n\to\infty$, the inequality
$$
  \liminf_{n\to\infty}\phi(x_n)\ge -\infty=\phi(\bar x)
$$
is trivially fulfilled.

If $F(\bar x)\ne\varnothing$ and $F(\bar x)\subseteq C$, then
$\phi(\bar x)=0$, so one finds
$$
  \liminf_{n\to\infty}\phi(x_n)\ge 0=\phi(\bar x).
$$

Now, assume that $F(\bar x)\not\subseteq C$, so that
$$
   \phi(\bar x)=\exc{F(\bar x)}{C}=\sup_{y\in F(\bar x)}
   \dist{y}{C}>0.
$$
Fix an arbitrary $\epsilon\in (0,\phi(\bar x))$. Then, corresponding
to $\epsilon$, there exists $y_\epsilon\in\ F(\bar x)$ such that
\begin{eqnarray}     \label{in:distyCeps}
    \dist{y_\epsilon}{C}>\phi(\bar x)-{\epsilon\over 2}.
\end{eqnarray}
Since $F$ is l.s.c. at $\bar x$, there exists $r_\epsilon>0$
such that
$$
   F(x)\cap\inte\ball{y_\epsilon}{\epsilon/2}\ne\varnothing,
   \quad\forall x\in \ball{\bar x}{r_\epsilon}.
$$
Thus, if $(x_n)_{n\in\N}$ is any sequence in $X$, with $x_n
\longrightarrow\bar x$, then for some $n_*\in\N$ one gets
the existence of $y_n\in F(x_n)\cap\inte\ball{y_\epsilon}{\epsilon/2}$
for every $n\ge n_*$.
On the account of inequality $(\ref{in:distyCeps})$, one obtains
\begin{eqnarray*}
    \phi(x_n) \ge  \dist{y_n}{C}\ge\dist{y_\epsilon}{C}
    -d(y_n,y_\epsilon)
     > \phi(\bar x)-\epsilon,\quad\forall n\in\N,\ n\ge n_*,
\end{eqnarray*}
wherefrom it follows
$$
  \liminf_{n\to\infty}\phi(x_n)\ge\phi(\bar x)-\epsilon.
$$
By arbitrariness of $\epsilon$, the thesis follows.

(ii) Let $(x_n)_{n\in\N}$ be any sequence in $X$ converging
to $\bar x$ as $n\to\infty$. Now, it is to be proved that
$$
  \limsup_{n\to\infty}\phi(x_n)\le\phi(\bar x).
$$
If $F(\bar x)=\varnothing$, then $\phi(\bar x)=\sup_{y\in\varnothing}
\dist{y}{C}=+\infty$, so the inequality to be proved trivially holds.
Suppose that $F(\bar x)\ne\varnothing$ and fix $\epsilon>0$.
Since $F$ is Hausdorff $C$-u.s.c. at $\bar x$, corresponding to
$\epsilon$ there exists $\delta_\epsilon>0$ such that
$$
  F(x)\subseteq\ball{F(\bar x)+C}{\epsilon}, \quad\forall x\in
  \ball{\bar x}{\delta_\epsilon}.
$$
As $x_n\longrightarrow\bar x$, there exists $n_*\in\N$ such that
$x_n\in\ball{\bar x}{\delta_\epsilon}$, for every $n\in\N$,
with $n\ge n_*$. Consequently, in force of Lemma \ref{lem:exbalset}
and Remark \ref{rem:enexdist}(iv), one obtains
\begin{eqnarray*}
   \phi(x_n) &=&\exc{F(x_n)}{C}\le\exc{\ball{F(\bar x)+C}{\epsilon}}{C}=
   \exc{F(\bar x)+C}{C}+\epsilon \\
   &=& \exc{F(\bar x)}{C}+\epsilon=\phi(\bar x)+\epsilon,
   \quad\forall n\ge n_*.
\end{eqnarray*}
From the last inequality, it follows
$$
  \limsup_{n\to\infty}\phi(x_n)\le\phi(\bar x)+\epsilon,
$$
which, by arbitrariness of $\epsilon$, completes the proof.
\end{proof}


\section{Metrically $C$-increasing mappings}

Throughout the current section, by $F:X\rightrightarrows\Y$
a set-valued mapping is denoted which is defined on
a metric space $(X,d)$ and takes values in a real normed
space $(\Y,\|\cdot\|)$, partially ordered by a closed, convex cone
$C\subset\Y$. This means that, associated in a standard way with
the cone $C$, a partial order relation $\le_C\subset\Y\times\Y$ is defined
as follows
$$
  y_1\le_C y_2 \qquad\hbox{ iff }\qquad y_2-y_1\in C.
$$

The next definition introduces a property for set-valued mappings
which plays a key role in establishing a solvability result for
generalized equation of the form $(\GE)$, along with related
error bound estimates.

\begin{definition}     \label{def:MCincmap}
(i) A mapping $F:X\rightrightarrows\Y$ is said to be {\it metrically $C$-increasing}
on $X$ if there exists a constant $a>1$ such that
\begin{equation}   \label{in:metincdefglo}
   \forall x\in\ X,\ \forall r>0,\
   \exists u\in\ball{x}{r}:\ \ball{F(u)}{ar}
     \subseteq\ball{F(x)+C}{r}.
\end{equation}
The quantity
$$
    \incr{F}{X}=\sup\{a>1:\ \hbox{ inclusion  $(\ref{in:metincdefglo})$
    holds}\}
$$
is called {\it exact bound of metric $C$-increase} of $F$ on $X$.

(ii) A mapping $F:X\rightrightarrows\Y$ is said to be {\it metrically $C$-increasing}
around $\bar x\in\dom F$ if there exist $\delta>0$ and $a>1$ such that
\begin{equation}   \label{in:metincdefloc}
   \forall x\in\ball{\bar x}{\delta},\ \forall r\in (0,\delta),\
   \exists u\in\ball{x}{r}:\ \ball{F(u)}{ar}
     \subseteq\ball{F(x)+C}{r}.
\end{equation}
The quantity
$$
    \incr{F}{\bar x}=\sup\{a>1:\ \exists\delta>0\ \hbox{such that
    inclusion $(\ref{in:metincdefloc})$ holds}\}
$$
is called {\it exact bound of metric $C$-increase} of $F$ around $\bar x$.
\end{definition}

The meaning of the property introduced above should appear from inclusion
$(\ref{in:metincdefglo})$. Loosely speaking, it postulates that the
values taken by $F$ near a given point `move', among other directions,
towards the cone $C$ in a manner which is proportional, with a
certain rate, to the distance from the given point. The exact
bound of metric $C$-increase provides a quantitative estimate
of such a behaviour.
It should be noticed that this notion is not connected with that of
monotonicity, inasmuch as on the space $X$ any partial
order or vector structure is lacking. The term `metric $C$-increase' indeed
refers to the matching of a metric behaviour with the partial order
induced by $C$ on the range space $\Y$.

The following examples provide concrete classes of mappings fulfilling
the property under study and, at the same time, hint connections with
another property widely investigated in variational analysis. In
what follows, $\Lin(\X,\Y)$ denotes the (Banach) space of all linear bounded
operators between the Banach spaces $\X$ and $\Y$, endowed with
the operator norm. In particular,
$\X^*=\Lin(\X,\R)$.

\begin{example}[Regular linear mappings]    \label{ex:oplinmap}
Recall that a linear bounded operator $\Lambda:\X\longrightarrow\Y$
between Banach spaces is said to be open at a linear rate if there
exists a constant $\alpha>0$ such that
\begin{equation}
  \Lambda\Uball\supseteq\alpha\Uball.
\end{equation}
It is well known that the celebrated Banach-Schauder theorem provides
a characterization for such a property: $\Lambda$ is open at a
linear rate iff $\Lambda$ is onto, i.e. $\Lambda\X=\Y$. The quantity
$$
   \lop\Lambda=\sup\{\alpha>0:\ \Lambda\Uball\supseteq\alpha\Uball\}
$$
is called the exact openness bound of $\Lambda$. Roughly speaking, it gives
a quantitative description of the openness at a linear rate. Such a constant
can be estimated in primal and in dual terms as follows: it holds
$$
  \lop\Lambda={1\over \|\Lambda^{-1}\|^-},
$$
where $\Lambda^{-1}:\Y\rightrightarrows\X$ is the inverse (in general) set-valued
mapping to $\Lambda$, which is always positively homogeneous and convex, while
$\|\cdot\|^{-}$ denotes the inner norm of a positively homogeneous mapping,
namely if $H:\Y\rightrightarrows\X$, it is
$$
  \|H\|^-=\sup_{y\in\Uball}\inf_{x\in H(y)}\|x\|;
$$
besides, it holds
$$
  \lop\Lambda=\inf_{\|u^*\|=1}\|\Lambda^\top u^*\|=\dist{\nullv}{\Lambda^\top\Usfer},
$$
where $\Lambda^\top\in\Lin(\Y^*,\X^*)$ denotes the adjoint operator to $\Lambda$
(see, for instance, \cite[Corollary 1.58]{Mord06}).
Now, let $\Y=\R^m$ be equipped with its classical Euclidean space structure, let
$C=\R^m_+$ and $m\ge 2$. In this setting, consider a linear bounded operator
$\Lambda:\X\longrightarrow\R^m$ such that
$$
  \lop\Lambda>m.
$$
This means that
\begin{equation}     \label{in:mlopLambda}
     \Lambda\Uball\supseteq m\Uball.
\end{equation}
Denote by $\mathcal{B}=\{e^1,\dots,e^m\}$ the canonical base of $\R^m$ and
define
$$
   e=\sqrt{m}\sum_{i=1}^{m}e^i.
$$
It is clear that $e\in m\Uball$. Then, because of inclusion $(\ref{in:mlopLambda})$,
there exists $u\in\Uball$ such that $\Lambda u=e$. On the other hand, notice that
$\ball{e}{\sqrt{m}}\subseteq\R^m_+$. Indeed, if $y\in\ball{e}{\sqrt{m}}$, it is
$\|y-e\|\le\sqrt{m}$, and hence
$$
  |y_i-\sqrt{m}|\le\sqrt{m},\quad\forall i=1,\dots,m,
$$
wherefrom one gets $y_i\ge 0$, for every $i=1,\dots,m$. Thus, it is possible
to deduce the existence of $u\in\Uball$ such that
$$
  \ball{\Lambda u}{\sqrt{m}}=\ball{e}{\sqrt{m}}\subseteq\R^m_+
  \subseteq\Uball+\R^m_+=\ball{\Lambda\nullv+\R^m_+}{1},
$$
where the last equality follows on the account of Remark \ref{rem:enexdist}(iii).
By exploiting the linearity of $\Lambda$, from the last inclusion one
readily obtains that for every $r>0$ there exists $u\in r\Uball$ such that
$$
  \ball{\Lambda u}{r\sqrt{m}}\subseteq\ball{\Lambda\nullv+\R^m_+}{r},
$$
and for every $x\in\X$ and for every $r>0$ there exists $u\in\ball{x}{r}$
such that
$$
  \ball{\Lambda u}{r\sqrt{m}}\subseteq\ball{\Lambda x+\R^m_+}{r}.
$$
Since $\sqrt{m}>1$, the above reasoning shows that any linear
mapping $\Lambda\in\Lin(\X,\R^m)$, with the property that $\lop\Lambda>m$,
turns out to be metrically $\R^m_+$-increasing on $\X$ with exact increase
bound $\incr{\Lambda}{\X}\ge\sqrt{m}$.
\end{example}

\begin{example}     \label{ex:LambdaC}
Let $\Lambda\in\Lin(\X,\R^m)$ and let $C\subset\R^m$ be a closed, convex cone.
Assume that $\incr{\Lambda}{\X}>a$. Then, the set-valued mapping
$L:\X\rightrightarrows\R^m$, defined by
$$
  L(x)=\Lambda x+C
$$
is metrically $C$-increasing and $\incr{L}{\X}\ge a$. Indeed, by
taking into account Remark \ref{rem:enexdist}(ii) and (iii), one finds
that for every $x\in\X$ and $r>0$, for any $\epsilon>0$ it holds
\begin{eqnarray*}
   \ball{L(u)}{ar} &=& \ball{\Lambda u+C}{ar}=\ball{\Lambda u}{ar}+C \\
   &\subseteq & \ball{\Lambda x+C}{r}+C =\ball{\Lambda x+C+C}{r}=\ball{L(x)+C}{r}.
\end{eqnarray*}
In the light of Example \ref{ex:oplinmap}, if for instance $\Lambda\in\Lin(\X,\R^m)$
is such that $\lop\Lambda>m$ and $C=\R^m_+$, then the resulting $L$
is metrically $\R^m_+$-increasing on $\X$ and $\incr{L}{\X}\ge\sqrt{m}$.
\end{example}

\begin{example}[Locally regular nonlinear mapping]
Recall that a mapping $f:X\longrightarrow Y$ between metric spaces is said
to be open at $\bar x\in X$ at a linear rate if there exist positive constants
$\delta$, $\alpha$ and $\zeta$ such that
\begin{equation}     \label{in:loclopdef}
   f(\ball{x}{r})\supseteq\ball{f(x)}{\alpha r}\cap\ball{f(\bar x)}{\zeta},
   \quad\forall x\in \ball{\bar x}{\delta},\ \forall r\in (0,\delta).
\end{equation}
Whenever $f$ is continuous at $\bar x$, inclusion $(\ref{in:loclopdef})$
takes the simpler form
$$
  f(\ball{x}{r})\supseteq\ball{f(x)}{\alpha r},
  \quad\forall x\in \ball{\bar x}{\delta},\ \forall r\in (0,\delta).
$$
The quantity
$$
  \loclop{f}{\bar x}=\sup\{\alpha>0:\ \exists\delta>0\ \hbox{such that
  inclusion $(\ref{in:loclopdef})$ holds}\}
$$
is called the exact bound of local openness of $f$ at $\bar x$. Whenever
$X$ and $Y$ are Banach spaces, according to the Lyusternik-Graves theorem,
a mapping $f:\X\longrightarrow\Y$ strictly differentiable at $\bar x\in\X$
turns out to be open at a linear rate at $\bar x$ iff its strict derivative $\FDer f
(\bar x)\in\Lin(\X,\Y)$ is onto. Moreover, the following primal and dual
estimates hold
$$
   \loclop{f}{\bar x}={1\over \|\FDer f(\bar x)^{-1}\|^-}=
   \inf_{\|u^*\|=1}\|\FDer f(\bar x)^\top u^*\|
$$
(see, for instance, \cite[Theorem 1.57]{Mord06}).
Now, letting $\Y=\R^m$, $C=\R^m_+$ and $m\ge 2$, suppose that a mapping
$f:\X\longrightarrow\R^m$ is continuous at $\bar x$ and such that
$$
  \loclop{f}{\bar x}>m.
$$
This means that there exists $\delta_m>0$ such that
\begin{equation}     \label{in:loclopfmr}
  f(\ball{x}{r})\supseteq\ball{f(x)}{mr},
  \quad\forall x\in\ball{\bar x}{\delta_m},\ \forall r\in (0,\delta_m).
\end{equation}
Fix arbitrarily $x\in\ball{\bar x}{\delta_m}$ and $r\in (0,\delta_m)$,
and define
$$
  e_r=r\sqrt{m}\sum_{i=1}^{m}e^i.
$$
As it is $\|e_r\|=rm$, one has $f(x)+e_r\in\ball{f(x)}{mr}$. Thus, from
inclusion $(\ref{in:loclopfmr})$, it follows that there exists $u\in\ball{x}{r}$
such that
$$
  f(u)=f(x)+e_r,
$$
while from the definition of $e_r$ it follows that $\ball{e_r}{r\sqrt{m}}
\subseteq\R^m_+$. Consequently, one obtains
\begin{eqnarray*}
   \ball{f(u)}{r\sqrt{m}} &=& \ball{f(x)+e_r}{r\sqrt{m}}=f(x)+\ball{e_r}{r\sqrt{m}}
   \subseteq f(x)+\R^m_+ \\
    &\subseteq&  \ball{f(x)+\R^m_+}{r}.
\end{eqnarray*}
This shows that, under the above assumptions, $f$ is metrically
$\R^m_+$-increasing around $\bar x$, with $\incr{f}{\bar x}\ge\sqrt{m}$.
By arguing as in Example \ref{ex:LambdaC}, it is not difficult to see that
the same property is shared with set-valued mappings of the form $f+\R^m_+$.
\end{example}

The next example exhibits a situation in which metric $C$-increase
takes place in the absence of openness.

\begin{example}
Let $X=\Y=\R$ be endowed with its usual Euclidean space structure, let
$C=[0,+\infty)$ and let $f:\R\longrightarrow\R$ be given by
$$
  f(x)=|x|.
$$
The mapping $f$ evidently fails to be open at a linear rate at $\bar x=0$.
Nevertheless, as one easily checks, it turns out to be metrically
$\R_+$-increasing on $\R$ with $\incr{f}{\R}=2$ (in particular, it is
$\R_+$-increasing around $\bar x$).
\end{example}

A feature of the property under study to be pointed out is its robustness
under additive Lipschitz continuous perturbations. Such a stability
behaviour can be exploited to build further examples of metrically $C$-increasing
mappings. In order to formulate this feature, let us
recall that a set-valued mapping $G:X\rightrightarrows Y$ between metric
spaces is said to be Lipschitz continuous on $X$ if there exists a constant
$\beta\ge 0$ such that
$$
  \Haus{G(x_1)}{G(x_2)}\le\beta d(x_1,x_2),\quad\forall x_1,\, x_2\in X,
$$
where $\Haus{S_1}{S_2}$ stands for the Hausdorff-Pompeiu distance of the
sets $S_1$ and $S_2$, i.e.
$$
   \Haus{S_1}{S_2}=\max\{\exc{S_1}{S_2},\,\exc{S_2}{S_1}\}
$$
(on the Lipschitz properties of set-valued mappings see \cite[Ch. 3C]{DonRoc14}).
To avoid some major technicalities arising with translations of set enlargements
in abstract spaces, the statement below is given for mappings taking values in
a finite-dimensional Euclidean space.

\begin{proposition}   \label{pro:addpertmetCinc}
Let $F:X\rightrightarrows\R^m$ and $G:X\rightrightarrows\R^m$ be
set-valued mappings and let $C\subset\R^m$ be a closed, convex cone.
Suppose that:

(i) $F$ is metrically $C$-increasing on $X$;

(ii) $F+G:X\rightrightarrows\R^m$ is closed-valued;

(iii) $G$ is Lipschitz continuous on $X$ with a constant $\beta$
satisfying the condition
$$
   \beta<1-{1\over \incr{F}{X}}.
$$

\noindent Then, the mapping $F+G:X\rightrightarrows\R^m$ is metrically
$C$-increasing on $X$, with
$$
   \incr{F+G}{X}\ge (1-\beta)\incr{F}{X}.
$$
\end{proposition}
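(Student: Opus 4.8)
The plan is to prove the defining inclusion $(\ref{in:metincdefglo})$ for $F+G$ with an explicitly chosen rate and then pass to the supremum. Since assumption (iii) reads $\beta<1-1/\incr{F}{X}$, multiplying $1-\beta>1/\incr{F}{X}$ by $\incr{F}{X}>1$ gives $(1-\beta)\incr{F}{X}>1$. Hence I may fix any $a$ with $1<a<\incr{F}{X}$ for which $(1-\beta)a>1$, with $a$ arbitrarily close to $\incr{F}{X}$. For such an $a$, the global inclusion $(\ref{in:metincdefglo})$ holds for $F$. I will then show that $F+G$ satisfies $(\ref{in:metincdefglo})$ with the rate $a':=(1-\beta)a$; letting $a\uparrow\incr{F}{X}$ and using that $a'$ can be pushed arbitrarily close to $(1-\beta)\incr{F}{X}>1$ yields $\incr{F+G}{X}\ge(1-\beta)\incr{F}{X}$.

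First I would fix $x\in X$ and $r>0$, and take the very same point $u\in\ball{x}{r}$ that witnesses the metric $C$-increase of $F$, so that $\ball{F(u)}{ar}\subseteq\ball{F(x)+C}{r}$. Because $d(u,x)\le r$, the Lipschitz continuity of $G$ gives $\exc{G(u)}{G(x)}\le\Haus{G(u)}{G(x)}\le\beta r$, i.e. $G(u)\subseteq\ball{G(x)}{\beta r}$. The heart of the argument is then the chain of inclusions
\begin{align*}
  \ball{F(u)+G(u)}{a'r}
    &\subseteq \ball{F(u)}{(a'+\beta)r}+G(x) \\
    &\subseteq \ball{F(u)}{ar}+G(x) \\
    &\subseteq \ball{F(x)+C}{r}+G(x) \\
    &\subseteq \ball{F(x)+G(x)+C}{r},
\end{align*}
whose four steps I would justify as follows. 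The first inclusion is a pointwise ``pull-out'' estimate: a point at distance $\le a'r$ from $F(u)+G(u)$ equals $f+g$ plus an error of norm $\le a'r$, and replacing $g\in G(u)$ by a nearest $g'\in G(x)$ (at distance $\le\beta r$) transfers the total error $\le(a'+\beta)r$ onto the $F(u)$-component, leaving $g'\in G(x)$ as a separate summand. The second inclusion uses only that $a'+\beta=(1-\beta)a+\beta=a-\beta(a-1)\le a$ since $a>1$ and $\beta\ge0$. The third is the metric $C$-increase of $F$, and the fourth is the always-valid inclusion $\ball{S}{r}+A\subseteq\ball{S+A}{r}$ from Remark \ref{rem:enexdist}(iii), applied with $S=F(x)+C$ and $A=G(x)$, noting $F(x)+C+G(x)=(F+G)(x)+C$. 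Since $F(u)+G(u)=(F+G)(u)$, this is precisely $(\ref{in:metincdefglo})$ for $F+G$ at rate $a'$.

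The main obstacle is the first inclusion, for two reasons. Conceptually it is where the Lipschitz error of $G$ is absorbed into the increase radius of $F$, and this is exactly what forces the budget condition $a'+\beta\le a$; the fact that $a>1$ (not merely $a\ge0$) is what makes the loss multiplicative by $1-\beta$ rather than additive. Technically, turning the distance bounds into genuine nearest points (both $\dist{z}{F(u)+G(u)}\le a'r$ and $\dist{g}{G(x)}\le\beta r$) relies on attainment of these infima, which is why the statement is confined to $\R^m$ with closed values (assumption (ii) guarantees $(F+G)(u)=F(u)+G(u)$ is closed). Once the chain is established, I would close the proof by the supremum passage described above, noting that $a'=(1-\beta)a$ stays $>1$ for $a$ near $\incr{F}{X}$, so these rates are admissible in the definition of $\incr{F+G}{X}$.
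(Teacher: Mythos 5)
Your proof is correct, and it follows the same overall strategy as the paper's: fix $a\in(1,\incr{F}{X})$ with $(1-\beta)a>1$, take the witness $u$ supplied by the metric $C$-increase of $F$, bound $G(u)\subseteq\ball{G(x)}{\beta r}$ by Lipschitz continuity, and combine via enlargement arithmetic to reach the rate $(1-\beta)a$. The only real difference is where the slack $\beta r$ is spent. The paper invokes $F$'s property at the \emph{shrunken} radius $(1-\beta)r$ (so its witness lies in $\ball{x}{(1-\beta)r}$), writes $\ball{F(u)+G(u)}{a(1-\beta)r}=\ball{F(u)}{a(1-\beta)r}+G(u)$, and then absorbs the error on the target side through Remark \ref{rem:enexdist}(v), namely $\ball{F(x)+C}{(1-\beta)r}+\ball{G(x)}{\beta r}\subseteq\ball{F(x)+G(x)+C}{r}$. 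You instead invoke the property at the full radius $r$ and absorb the error on the source side, via the pull-out inclusion $\ball{F(u)+G(u)}{a'r}\subseteq\ball{F(u)}{(a'+\beta)r}+G(x)$ together with $a'+\beta\le a$; your nearest-point argument there correctly uses closedness of $(F+G)(u)$ and $G(x)$ in $\R^m$, which is exactly what hypothesis (ii) and the standing closed-valuedness assumption provide (this is also where the paper needs finite dimension, for the equality/inclusion rules of Remark \ref{rem:enexdist}(iii) and (v)). Both bookkeepings land on the same exact bound, and your limiting passage $a\uparrow\incr{F}{X}$ matches the paper's.
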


\begin{proof}
Notice that hypothesis (iii) implies, in particular, the fact that $\beta<1$.
Fix $x\in X$ and $r>0$. By virtue of the condition on $\beta$ in hypothesis
(iii), it is possible to pick $a\in\ (1,\incr{F}{X})$ in such a way that
\begin{equation}   \label{in:condabeta1}
   a(1-\beta)>1.
\end{equation}
Since $F$ is metrically $C$-increasing on $X$ with exact bound $\incr{F}{X}$,
corresponding to $x$ and $(1-\beta)r>0$ there exists $u\in\ball{x}{r}$ such that
\begin{equation}    \label{in:metCincrFux}
   \ball{F(u)}{a(1-\beta)r}\subseteq\ball{F(x)+C}{(1-\beta)r}.
\end{equation}
Since $G$ is Lipschitz continuous on $X$ with a constant $\beta$,
one has in particular
$$
  \exc{G(u))}{G(x)}\le\beta d(u,x)\le\beta r,
$$
which entails
\begin{equation}    \label{in:GHauslipux}
   G(u)\subseteq\ball{G(x)}{\beta r}.
\end{equation}
By taking into account Remark \ref{rem:enexdist}(iii) and (v), from inclusions
$(\ref{in:GHauslipux})$ and $(\ref{in:metCincrFux})$ one obtains
\begin{eqnarray*}
   \ball{F(u)+G(u)}{a(1-\beta)r} &=&\ball{F(u)}{a(1-\beta)r}+G(u) \\
   &\subseteq& \ball{F(x)+C}{(1-\beta)r}+\ball{G(x)}{\beta r} \\
   &\subseteq& \ball{F(x)+G(x)}{r}.
\end{eqnarray*}
This shows that $F+G$ is metrically $C$-increasing on $X$ and that
$\incr{F+G}{X}\ge a(1-\beta)$. Since the latter remains true for every
$a\in\ (1,\incr{F}{X})$ fulfilling the condition in $(\ref{in:condabeta1})$,
the validity of the estimate in the thesis is also proved.
\end{proof}

\begin{remark}
(i) It is worth noting that the hypothesis (iii) in Proposition \ref{pro:addpertmetCinc}
can be weakened by replacing the Lipschitz continuity of $G$ with a more general
property called Lipschitz $C$-continuity, that postulates the existence of $\beta\ge 0$
such that
$$
  \max\{\exc{G(x_1)}{G(x_2)+C},\,\exc{G(x_2)}{G(x_1)+C}\}\le\beta d(x_1,x_2),
  \quad\forall x_1,\, x_2\in X.
$$

(ii) Proposition \ref{pro:addpertmetCinc} can be easily reformulated for mappings
metrically $C$-increasing around a given point. In such an event, the Lipschitz
continuity of $G$ on $X$ can be replaced with its local counterpart around the
reference point.

(iii) It is well known that a sufficient condition for the sum of two closed
sets to be still closed is that one of them is compact. Therefore, hypothesis
(ii) is satisfied provided that, for instance, $G$ is compact-valued.
\end{remark}

To the aim of better understanding the idea behind the notion of
metric $C$-increase, it is useful to consider how it behaves in the
very special case of single-valued scalar functions, with the cone
$C=(-\infty,0]$. In this setting, in its local version, the metric $C$-increase
of $\varphi:X\longrightarrow\R$ around $\bar x$ prescribes the existence
of $a>1$ such that, if in particular $x=\bar x$,
an element $u\in\ball{\bar x}{r}$ can be found such that
$$
  [\varphi(u)-ar,\varphi(u)+ar]\subseteq\ball{\varphi(\bar x)
  +(-\infty,0]}{r}=\ball{(-\infty,\varphi(\bar x)]}{r}=
  (-\infty,\varphi(\bar x)+r].
$$
This implies that $u\in\ball{\bar x}{r}$ must be such that
$$
   \varphi(u)+ar\le\varphi(\bar x)+r
$$
and therefore it must hold
\begin{equation}      \label{in:decprinc}
   \inf_{x\in\ball{\bar x}{r}}\varphi(x)\le\varphi(\bar x)-(a-1)r.
\end{equation}
The condition expressed by inequality $(\ref{in:decprinc})$ appears in
what in variational analysis is referred to as a decrease principle.
Such kind of statements provides conditions, expressed in terms of
generalized derivatives (even in a metric space setting),
upon which inequality $(\ref{in:decprinc})$
holds true. It found various applications in the analysis of solvability
and error bounds for inequalities defined by l.s.c. scalar functions
(see, among others, \cite[Theorem 3.6.2]{BorZhu05} and \cite[Ch. 1.6]{Peno13}).
For instance, any function $\delta_\varphi:X\longrightarrow [0,+\infty]$
such that for every $x\in X$ and $r,\, c>0$
$$
   \inf_{z\in\ball{x}{r}}\delta_\varphi(z)\ge c\qquad\Longrightarrow\qquad
   \inf_{z\in\ball{x}{r}}\varphi(z)\le\varphi(x)-cr
$$
is called decrease index for $\varphi$. The strong slope of a function
and its various subdifferential representations in Banach space settings
are well-known examples of decrease index. Thus, metric $C$-increase can be
viewed as a condition, directly formulated on set-valued mappings,
which leads to generalize a behaviour that can be obtained through
decrease principles.


\section{Solvability and error bounds}

The main question of this paper, which stimulated the introduction
of the notion of metric $C$-increase, is the solvability of generalized
equation of the form $(\GE)$ and related error bounds. As a
prolegomenon to the analysis of such a question, it is worth
pointing out the following basic topological property of the
solution set $\Solv{F,C}$, which can be obtained at once under mild
assumptions on the problem data.

\begin{proposition}     \label{pro:clsolset}
With reference to a given $(\GE)$, suppose that $F$ that is l.s.c.
on $X$. Then, $\Solv{F,C}$ is a (possibly empty) closed set.
If $F$ is l.s.c. in a neighbourhood of $\bar x\in\Solv{F,C}$,
then $\Solv{F,C}$ is locally closed around $\bar x$, i.e. there
exists $\delta>0$ such that $\Solv{F,C}\cap\ball{\bar x}{\delta}$
is closed.
\end{proposition}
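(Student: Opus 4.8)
The plan is to recognise the solution set as a sublevel set of the excess function $\phi$ associated with $F$ and $C$ via $(\ref{def:excfunctFC})$, and then to deduce closedness from the lower semicontinuity of $\phi$ guaranteed by Lemma \ref{lem:s.c.Fphi}. The crucial preliminary observation I would record is the identity
$$
  \Solv{F,C}=\{x\in X:\ \phi(x)\le 0\}.
$$
To justify it, I would argue as follows: if $x\in\Solv{F,C}$, then either $F(x)=\varnothing$, whence $\phi(x)=\sup_{y\in\varnothing}\dist{y}{C}\le 0$, or $F(x)\ne\varnothing$ and every $y\in F(x)$ lies in $C$, so that $\dist{y}{C}=0$ for all such $y$ and hence $\phi(x)=0$. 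Conversely, if $\phi(x)\le 0$ and $F(x)\ne\varnothing$, then $\dist{y}{C}\le 0$, i.e. $\dist{y}{C}=0$, for every $y\in F(x)$; since $C$ is closed this forces $y\in C$, whence $F(x)\subseteq C$. The case $F(x)=\varnothing$ gives $F(x)\subseteq C$ trivially.

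For the global assertion, I would invoke Lemma \ref{lem:s.c.Fphi}(i): the l.s.c. of $F$ on $X$ yields the l.s.c. of $\phi$ at every point of $X$. It then suffices to recall that any sublevel set of a (possibly extended-real-valued) l.s.c. function is closed. Concretely, given a sequence $(x_n)_{n\in\N}$ in $\Solv{F,C}$ with $x_n\to x$, from $\phi(x_n)\le 0$ and lower semicontinuity one gets $\phi(x)\le\liminf_{n\to\infty}\phi(x_n)\le 0$, so that $x\in\Solv{F,C}$ by the identity above. This proves that $\Solv{F,C}$ is closed.

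For the local assertion I would localise the same argument. Assuming $F$ l.s.c. on an open neighbourhood $U$ of $\bar x$, I would pick $\delta>0$ with $\ball{\bar x}{\delta}\subseteq U$, so that $\phi$ is l.s.c. at each point of the closed ball $\ball{\bar x}{\delta}$. Taking any sequence in $\Solv{F,C}\cap\ball{\bar x}{\delta}$ converging to some $x$, closedness of the ball gives $x\in\ball{\bar x}{\delta}\subseteq U$, lower semicontinuity of $\phi$ at $x$ gives $\phi(x)\le 0$, and hence $x\in\Solv{F,C}\cap\ball{\bar x}{\delta}$; this yields the desired local closedness.

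I do not expect a genuine obstacle here, since the statement is essentially a corollary of Lemma \ref{lem:s.c.Fphi}. The only point demanding care is the bookkeeping around empty values of $F$: the sublevel-set identity and the sequential argument must be phrased so as to remain valid under the convention $\phi(x)=\sup_{y\in\varnothing}\dist{y}{C}$ adopted in the proof of Lemma \ref{lem:s.c.Fphi}(i), and one should confirm that the passage $\phi(x)\le\liminf_{n\to\infty}\phi(x_n)$ is legitimate for extended-real-valued functions, which it is, directly from the definition of lower semicontinuity.
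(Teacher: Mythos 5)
Your proposal is correct and follows exactly the paper's own route: identifying $\Solv{F,C}$ with the sublevel set $\{x\in X:\ \phi(x)\le 0\}$ and invoking Lemma \ref{lem:s.c.Fphi}(i) together with the closedness of $C$. The paper states this in one line; you have merely supplied the routine details (the empty-value convention, the sequential sublevel-set argument, and the localisation to a closed ball), all of which are handled correctly.
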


\begin{proof}
Since the convex cone $C$ is closed, it suffices to observe that
$\Solv{F,C}$ can be characterized as a sublevel set of the
function $\phi$ defined as in $(\ref{def:excfunctFC})$, namely
$$
  \Solv{F,C}=\{x\in X:\ \phi(x)\le 0\},
$$
and then to recall Lemma \ref{lem:s.c.Fphi}(i).
\end{proof}

One is now in a position to establish the main findings of the
paper.

\begin{theorem}[Solvability and global error bound]    \label{thm:solgloerbo}
Given a set-valued mapping $F:X\rightrightarrows\Y$, suppose
that:

(i) $(X,d)$ is metrically complete;

(ii) $F$ is l.s.c. on $X$;

(iii) $F$ is metrically $C$-increasing on $X$, with exact bound
$\incr{F}{X}$.

\noindent Then, $\Solv{F,C}\ne\varnothing$ and the following
estimate holds
$$
  \dist{x}{\Solv{F,C}}\le{\exc{F(x)}{C}\over\incr{F}{X}-1},\quad
  \forall x\in X.
$$
\end{theorem}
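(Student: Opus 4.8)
The plan is to recast $(\GE)$ as a sublevel problem for the scalar function $\phi(x)=\exc{F(x)}{C}$ and to show that metric $C$-increase forces $\phi$ to obey a decrease principle, which, combined with the Ekeland variational principle, pins down a zero of $\phi$ close to any prescribed point. First I would record that, by hypothesis (ii) and Lemma \ref{lem:s.c.Fphi}(i), $\phi$ is l.s.c. on $X$, that $\phi\ge 0$, and that $\Solv{F,C}=\{x\in X:\ \phi(x)=0\}$, exactly as in the proof of Proposition \ref{pro:clsolset}.

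The crucial mechanism is the passage from the set inclusion defining metric $C$-increase to a scalar decrease inequality for $\phi$. Fix $a'\in(1,\incr{F}{X})$, so that inclusion $(\ref{in:metincdefglo})$ holds with constant $a'$. Given $\bar x$ with $\phi(\bar x)>0$ and any $r>0$, choose $u\in\ball{\bar x}{r}$ with $\ball{F(u)}{a'r}\subseteq\ball{F(\bar x)+C}{r}$ and take the excess over $C$ of both sides. By monotonicity of the excess under inclusion, together with Remark \ref{rem:enexdist}(iv) and Lemma \ref{lem:exbalset} applied to the right-hand side (legitimate since $\exc{F(\bar x)+C}{C}=\phi(\bar x)>0$), the right-hand excess equals $\phi(\bar x)+r$. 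For the left-hand side I would distinguish two cases: if $\phi(u)>0$, Lemma \ref{lem:exbalset} gives left-hand excess $\phi(u)+a'r$, whence $\phi(u)\le\phi(\bar x)-(a'-1)r$; if instead $\phi(u)=0$, then $u\in\Solv{F,C}$ outright. Thus metric $C$-increase yields, at every non-solution $\bar x$, a nearby point that either decreases $\phi$ at the linear rate $a'-1$ or already solves $(\GE)$.

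Next I would invoke the Ekeland variational principle for the proper, l.s.c., bounded-below function $\phi$ on the complete space $X$ (hypothesis (i)), in the form producing, for each $\gamma>0$ and each base point $x_0$ with $\phi(x_0)<+\infty$, a point $\bar x$ with $\phi(\bar x)+\gamma d(\bar x,x_0)\le\phi(x_0)$ and $\phi(\bar x)<\phi(z)+\gamma d(z,\bar x)$ for all $z\ne\bar x$. Choosing the slope $\gamma\in(0,a'-1)$ and applying the decrease step above at $\bar x$ with radius $r=\phi(\bar x)/\gamma$ would contradict the Ekeland minimality condition in both cases (the strict-decrease case forces $a'-1<\gamma$, the solution case forces $0<0$), unless $\phi(\bar x)=0$. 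Hence $\bar x\in\Solv{F,C}$, which already proves $\Solv{F,C}\ne\varnothing$. Feeding $\phi(\bar x)=0$ back into the localization estimate gives $\dist{x_0}{\Solv{F,C}}\le d(\bar x,x_0)\le\phi(x_0)/\gamma$, and letting $\gamma\uparrow a'-1$ and $a'\uparrow\incr{F}{X}$ delivers the announced bound; the degenerate cases $\phi(x_0)\in\{0,+\infty\}$ are immediate.

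I expect the main obstacle to be the rigorous execution of the excess computation in the decrease step, specifically guaranteeing that the hypotheses of Lemma \ref{lem:exbalset} (positivity of the excess over $C$) are met on both sides and correctly absorbing the borderline case $\phi(u)=0$, where that lemma fails but the conclusion is even stronger. A secondary point requiring care is the calibration of the Ekeland slope $\gamma$ strictly below $a'-1$ and of the radius $r$, so that a single application yields a genuine contradiction in each case; the final limit $\gamma\uparrow\incr{F}{X}-1$ then has to be taken to recover the sharp constant in the error bound.
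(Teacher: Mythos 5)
Your proposal is correct and follows essentially the same route as the paper: recast the problem via $\phi(x)=\exc{F(x)}{C}$, apply the Ekeland variational principle on the complete space, and use Lemma \ref{lem:exbalset} together with Remark \ref{rem:enexdist}(iv) to turn the metric $C$-increase inclusion into a scalar decrease that contradicts Ekeland's minimality unless $\phi$ vanishes. The only (harmless) differences are your choice of radius $r=\phi(\bar x)/\gamma$ with a strictly smaller slope $\gamma<a'-1$ instead of the paper's $r=\phi(x_\lambda)$ with slope $a-1$, and your explicit treatment of the borderline case $\phi(u)=0$, which the paper handles more tersely.
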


\begin{proof}
Consider the functional $\phi:X\longrightarrow [0,+\infty)$ defined as
in $(\ref{def:excfunctFC})$ and take an arbitrary $a\in (1,\incr{F}{X})$.
Notice that, by virtue of the lower semicontinuity of $F$ on $X$,
in the light of Lemma \ref{lem:s.c.Fphi}(i), the function $\phi$ is l.s.c.
on $X$ and it is bounded from below by definition. Take an
arbitrary $x_0\in X$. If it is $\phi(x_0)=0$, then, as $C$ is
closed, one has $F(x_0)\subseteq C$, so all the assertions in the thesis
trivially happen to hold true. Assume now that $\phi(x_0)>0$.
Since it is
$$
  \phi(x_0)\le\inf_{x\in X}\phi(x)+\phi(x_0)
$$
and the metric space $(X,d)$ has been assumed to be complete,
according to the Ekeland variational principle (see \cite{Ekel74})
for every $\lambda>0$ there exists $x_\lambda\in X$ such that
$$
   \phi(x_\lambda)\le\phi(x_0),
$$
\begin{eqnarray} \label{in:EVP2}
  d(x_\lambda,x_0)\le\lambda,
\end{eqnarray}
and
\begin{eqnarray}   \label{in:EVP3}
   \phi(x_\lambda)<\phi(x)+{\phi(x_0)\over\lambda}
   d(x,x_\lambda),\quad\forall x\in\ X\backslash
   \{x_\lambda\}.
\end{eqnarray}
Let us show that, if choosing
\begin{eqnarray}    \label{eq:EVPlambda}
   \lambda={\phi(x_0)\over a-1},
\end{eqnarray}
the corresponding $x_\lambda\in X$ is a zero of $\phi$.
Ab absurdo, suppose that $\phi(x_\lambda)>0$. By hypothesis
(ii), corresponding to $r=\phi(x_\lambda)$ there exists
$u\in\ball{x_\lambda}{\phi(x_\lambda)}$ with the property
$$
    \ball{F(u)}{a\phi(x_\lambda)}
     \subseteq\ball{F(x_\lambda)+C}{\phi(x_\lambda)}.
$$
Notice that it must be $u\ne x_\lambda$, because for positive
$r=\phi(x_\lambda)$ the last inclusion
fails to be true, as observed in Remark \ref{rem:notincarr}.
By applying Lemma \ref{lem:exbalset} and Remark \ref{rem:enexdist}(iv),
one obtains
\begin{eqnarray*}
  \phi(u) &=&\exc{F(u)}{C}=\exc{\ball{F(u)}{a\phi(x_\lambda)}}{C}
      -a\phi(x_\lambda)   \\
      & \le& \exc{\ball{F(x_\lambda)+C}{\phi(x_\lambda)}}{C}
      -a\phi(x_\lambda)  \\
      &=&\exc{F(x_\lambda)+C}{C}+\phi(x_\lambda)-a\phi(x_\lambda) \\
      &=&\exc{F(x_\lambda)}{C}+(1-a)\phi(x_\lambda)
      =(2-a)\phi(x_\lambda).
\end{eqnarray*}
If $a>2$, the above relation already yields a contradiction. Otherwise,
by plugging $u$ in inequality $(\ref{in:EVP3})$ (remember that
$u\ne x_\lambda$), one finds
$$
  \phi(x_\lambda)<\phi(u)+(a-1)d(u,x_\lambda)\le
  \phi(x_\lambda),
$$
wherefrom contradiction arises. This fact shows that,
for the value of $\lambda$ chosen as above, it must happen that
$x_\lambda\in\Solv{F,C}$. Moreover, as a consequence of inequality
$(\ref{in:EVP2})$, by recalling formula $(\ref{eq:EVPlambda})$, one obtains
$$
   \dist{x_0}{\Solv{F,C}}\le d(x_0,x_\lambda)\le
   {\exc{F(x_0)}{C}\over a-1}.
$$
By arbitrariness of $a\in (1,\incr{F}{X})$, the last estimate completes
the proof.
\end{proof}

A generalized equation of the form $(\GE)$ can be regarded as a special
case of a more general problem, which was called set-inclusion:
given two set-valued mappings $\Psi:X\rightrightarrows Y$ and
$\Phi:X\rightrightarrows Y$ between metric spaces
$$
   \hbox{find $x\in X$ such that }  \Phi(x)\subseteq\Psi(x).
$$
A result concerning the solvability and error bounds for set-valued
inclusion problems was obtained in \cite[Theorem 3.3]{Uder17}, which
relies on the notion of set-covering mapping. Nonetheless, Theorem \ref{thm:solgloerbo}
can not be derived as a special case from the aforementioned
result. Indeed, with the identification $\Phi=F$ and $\Psi\equiv C$,
the set-covering assumption on $\Psi$ appearing in Theorem 3.3 will
be never satisfied. Moreover, in that theorem $\Phi$ is supposed to be
Lipschitz continuous on $X$ with bounded values, what is not
required in Theorem \ref{thm:solgloerbo}. In fact, even though
exploiting an analogous variational technique of proof, the approach
here proposed to address solvability and error bounds for $(\GE)$ is
based on a different behaviour of the mapping $F$: the reader should
notice that the notion of metric $C$-increase needs a partial order
structure on the range space $\Y$, whereas set-covering is a purely
metric property.

After the seminal paper \cite{Arut07}, A.V. Arutyunov and his research
group developed a theory about coincidence points for set-valued mappings,
aimed at unifying the well-known results by Banach-Caccioppoli and
Milyutin, where the property of openness plays a crucial role.
Recall that, according to \cite[First Definition]{Arut07}, a
mapping  $f:X\longrightarrow Y$ between metric spaces is said to be
open (at a linear rate) on $X$ with constant $\alpha>0$ if
$$
  f(\ball{x}{r})\supseteq\ball{f(x)}{\alpha r},\quad
  \forall r\ge 0,\ \forall x\in X.
$$
Within this theory it is possible to derive a solvability result,
with a related error bound estimate, for the following special case
of $(\GE)$:
\begin{equation}
    \hbox{find $x\in X$ such that $f(x)\in C$},
\end{equation}
where $f:X\longrightarrow\Y$ is a given single-valued mapping. More
precisely, it is possible to prove what follows: suppose that

(i) $(X,d)$ is metrically complete;

(ii) $f$ is continuous on $X$;

(iii) $f$ is open (at a linear rate) on $X$ with constant $\alpha$;

\noindent then $\Solv{f,C}\ne\varnothing$ and
$$
  \dist{x}{\Solv{f,C}}\le{\dist{f(x)}{C}\over\alpha},
  \quad\forall x\in X.
$$
Since, as mentioned above, the properties of metric $C$-increase
and openness are independent, a single-valued specialization of
Theorem \ref{thm:solgloerbo} seems not be achievable within the
theory developed after \cite{Arut07}.

In optimization contexts, where local optimality is to be investigated,
often local error bound estimates are required. Such a need motivates the
interest in a local version of the previous result. Its proof is provided
in detail inasmuch as the variational technique employed in
Theorem \ref{thm:solgloerbo} must be submitted to nontrivial
adjustments, when referred to a local setting.

\begin{theorem}[Local error bound]    \label{thm:sollocerbo}
Let  $F:X\rightrightarrows\Y$ be a set-valued mapping and let
$\bar x\in\Solv{F,C}$. Suppose that:

(i) $(X,d)$ is metrically complete;

(ii) $F$ is l.s.c. in a neighbourhood of $\bar x$ and Hausdorff
$C$-u.s.c. at $\bar x$;

(iii) $F$ is metrically $C$-increasing around $\bar x$ with
exact bound $\incr{F}{\bar x}$.

\noindent Then, for every $a\in (1,\incr{F}{\bar x})$ there exists
$\delta_a>0$ such that
$$
  \dist{x}{\Solv{F,C}}\le{\exc{F(x)}{C}\over a-1},\quad
  \forall x\in\ball{\bar x}{\delta_a}.
$$
\end{theorem}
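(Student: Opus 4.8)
The plan is to rerun the variational argument from the proof of Theorem \ref{thm:solgloerbo}, but confined to a suitable closed ball around $\bar x$, exploiting the extra Hausdorff $C$-u.s.c. hypothesis to keep every point produced by the construction inside the region where both the lower semicontinuity of $F$ and the metric $C$-increase of $F$ are available. First I would fix $a\in(1,\incr{F}{\bar x})$ and collect a single radius $\delta>0$ small enough that $F$ is l.s.c. on $\ball{\bar x}{\delta}$ and that the defining inclusion $(\ref{in:metincdefloc})$ holds for this $a$, with center in $\ball{\bar x}{\delta}$ and radius in $(0,\delta)$. Setting $B=\ball{\bar x}{\delta}$, the space $B$ is complete (closed in the complete $X$), and by Lemma \ref{lem:s.c.Fphi}(i) the function $\phi(x)=\exc{F(x)}{C}$ is l.s.c. on $B$ and bounded below by $0$.

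The decisive new ingredient, absent in the global setting, comes from hypothesis (ii): since $\bar x\in\Solv{F,C}$ forces $\phi(\bar x)=0$ and $F$ is Hausdorff $C$-u.s.c. at $\bar x$, Lemma \ref{lem:s.c.Fphi}(ii) makes $\phi$ u.s.c. at $\bar x$, so $\phi(x_0)\to 0$ as $x_0\to\bar x$. I would therefore choose $\delta_a\in(0,\delta/2]$ so small that $\frac{a}{a-1}\phi(x_0)<\frac{\delta}{2}$ for every $x_0\in\ball{\bar x}{\delta_a}$. Fix such an $x_0$; if $\phi(x_0)=0$ there is nothing to prove, so assume $\phi(x_0)>0$ (note $x_0\in B$ as $\delta_a\le\delta/2$) and apply the Ekeland variational principle \cite{Ekel74} to $\phi$ on the complete space $B$, with perturbation constant $\phi(x_0)$ and $\lambda=\phi(x_0)/(a-1)$. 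This yields $x_\lambda\in B$ with $\phi(x_\lambda)\le\phi(x_0)$, $d(x_\lambda,x_0)\le\lambda$, and the minimality inequality $\phi(x_\lambda)<\phi(x)+(a-1)d(x,x_\lambda)$ for all $x\in B\setminus\{x_\lambda\}$, since $\phi(x_0)/\lambda=a-1$.

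Arguing ab absurdo, suppose $\phi(x_\lambda)>0$. Because $x_\lambda\in B\subseteq\ball{\bar x}{\delta}$ and $0<\phi(x_\lambda)\le\phi(x_0)<\delta$, I may invoke metric $C$-increase at $x_\lambda$ with $r=\phi(x_\lambda)$, obtaining $u\in\ball{x_\lambda}{\phi(x_\lambda)}$, with $u\ne x_\lambda$ by Remark \ref{rem:notincarr}, satisfying $\ball{F(u)}{a\phi(x_\lambda)}\subseteq\ball{F(x_\lambda)+C}{\phi(x_\lambda)}$. The step I expect to be the crux is verifying that this $u$ still lies in $B$, so that it is a legitimate competitor in the localized Ekeland inequality: this is exactly what the smallness of $\phi(x_0)$, forced by the u.s.c. of $\phi$ at $\bar x$, buys us, since
\begin{eqnarray*}
   d(u,\bar x) &\le& d(u,x_\lambda)+d(x_\lambda,x_0)+d(x_0,\bar x) \\
   &\le& \phi(x_0)+\frac{\phi(x_0)}{a-1}+\delta_a
   =\frac{a}{a-1}\phi(x_0)+\delta_a<\delta,
\end{eqnarray*}
by the choice of $\delta_a$; hence $u\in B\setminus\{x_\lambda\}$.

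From this point the computation is verbatim that of Theorem \ref{thm:solgloerbo}: Lemma \ref{lem:exbalset} together with Remark \ref{rem:enexdist}(iv), applied to the displayed inclusion, gives $\phi(u)\le(2-a)\phi(x_\lambda)$, which already contradicts $\phi(u)\ge 0$ when $a>2$, and otherwise, plugging $u$ into the minimality inequality and using $d(u,x_\lambda)\le\phi(x_\lambda)$, yields $\phi(x_\lambda)<\phi(x_\lambda)$, a contradiction. Therefore $\phi(x_\lambda)=0$, i.e. $x_\lambda\in\Solv{F,C}$, and consequently
\[
   \dist{x_0}{\Solv{F,C}}\le d(x_0,x_\lambda)\le\lambda=\frac{\exc{F(x_0)}{C}}{a-1},
\]
which is the asserted estimate on $\ball{\bar x}{\delta_a}$. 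The whole difficulty is thus concentrated in the bookkeeping of the second and third paragraphs, namely in guaranteeing simultaneously $x_\lambda\in B$, $\phi(x_\lambda)\in(0,\delta)$, and $u\in B$; the core contradiction mechanism is unchanged from the global case.
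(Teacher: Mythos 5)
Your argument is correct, and it runs on the same engine as the paper's proof: Ekeland's principle applied to $\phi(x)=\exc{F(x)}{C}$, the contradiction obtained from metric $C$-increase via Lemma \ref{lem:exbalset} and Remark \ref{rem:notincarr}, and the Hausdorff $C$-u.s.c. hypothesis entering only through Lemma \ref{lem:s.c.Fphi}(ii) to force $\phi$ to be small near $\bar x$. Where you genuinely diverge is in the localization bookkeeping, which is exactly the part the paper itself flags as requiring ``nontrivial adjustments''. The paper applies Ekeland on the small ball $\ball{x_0}{\delta_a}$ centred at the point being estimated; the competitor $u$ produced by metric $C$-increase may then escape that ball, and the paper must split into cases: when $d(u,x_0)\le\delta_a$ the contradiction goes through, while when $d(u,x_0)>\delta_a$ it deduces $\delta_a< d(u,x_0)\le\phi(x_0)/(a-1)$ and gets the error bound trivially from $\dist{x_0}{\Solv{F,C}}\le d(x_0,\bar x)\le\delta_a$. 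You instead run Ekeland on the large ball $B=\ball{\bar x}{\delta}$ on which all hypotheses are simultaneously active, and use the u.s.c.-induced smallness of $\phi(x_0)$ (your condition $\tfrac{a}{a-1}\phi(x_0)<\delta/2$ on $\ball{\bar x}{\delta_a}$) to verify once and for all, via $d(u,\bar x)\le\phi(x_0)+\phi(x_0)/(a-1)+\delta_a<\delta$, that $u$ is a legitimate competitor. This buys you a single uniform conclusion $\dist{x_0}{\Solv{F,C}}\le d(x_0,x_\lambda)\le\lambda$ with no terminal case analysis, at the price of having to track one extra triangle inequality; the paper's version keeps the Ekeland domain tied to $x_0$, which is closer to how a purely local minimization principle is usually invoked. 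One shared blemish, inherited verbatim from the computation in Theorem \ref{thm:solgloerbo} rather than introduced by you: the identity $\phi(u)=\exc{\ball{F(u)}{a\phi(x_\lambda)}}{C}-a\phi(x_\lambda)$ uses Lemma \ref{lem:exbalset}, which requires $\exc{F(u)}{C}>0$, so the case $\phi(u)=0$ should strictly be dispatched separately (for $a\le 2$ it still contradicts the Ekeland inequality directly, and one may always reduce to $a\le 2$ since metric $C$-increase with a given bound implies it with any smaller bound greater than $1$).
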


\begin{proof}
Fixed any $a\in (1,\incr{F}{\bar x})$, by hypothesis (iii) there
exists $\delta_0>0$ such that
\begin{equation}    \label{in:metCincrFbarxdelta0}
\forall x\in\ball{\bar x}{\delta_0},\ \forall r\in (0,\delta_0),
\ \exists u\in\ball{x}{r}:\ \ball{F(u)}{ar}\subseteq\ball{F(x)+C}{r}.
\end{equation}
Consider once again the function $\phi$ defined as in $(\ref{def:excfunctFC})$.
By virtue of the lower semicontinuity of $F$ in a neighbourhood of
$\bar x$, in force of Lemma \ref{lem:s.c.Fphi}(i), there exists $\delta_1>0$
such that $\phi$ is l.s.c. at each point of $\ball{\bar x}{\delta_1}$.
Furthermore, as $F$ is also Hausdorff $C$-u.s.c. at $\bar x$, $\phi$ turns
out to be, in particular, continuous  at $\bar x$. Since $\phi(\bar x)=0$
because $\bar x\in\Solv{F,C}$, corresponding to $\delta_0$ there exists
$\delta_2>0$ such that
\begin{equation}    \label{in:phidelta02}
  \phi(x)\le {\delta_0\over 2},\quad\forall x\in\ball{\bar x}{\delta_2}.
\end{equation}
Now, let us define
$$
  \delta_*=\min\{\delta_0,\, \delta_1,\, \delta_2\}
$$
and set $\delta_a=\delta_*/4$.

To show the validity of the inequality in the thesis, fix an arbitrary
$x_0\in\ball{\bar x}{\delta_a}\backslash\Solv{F,C}$. The fact that
$x_0\not\in\Solv{F,C}$ implies $\phi(x_0)>0$.  Consider the function
$\phi:\ball{x_0}{\delta_a}\longrightarrow [0,+\infty)$. Observe that,
as it is true that $\ball{x_0}{\delta_a}\subseteq\ball{\bar x}{\delta_*/2}
\subseteq\ball{\bar x}{\delta_1}$, $\phi$ is l.s.c. on $\ball{x_0}{\delta_a}$.
This set, as a closed subset of a complete metric space, is complete.
Therefore it is possible to apply the Ekeland variational principle.
It ensures that, corresponding to $\lambda={\phi(x_0)\over a-1}$,
there exists $x_\lambda\in\ball{x_0}{\delta_a}$ such that
\begin{equation}      \label{in:EVPloc1}
    \phi(x_\lambda)\le\phi(x_0)-(a-1)d(x_\lambda,x_0),
\end{equation}
\begin{equation}     \label{in:EVPloc2}
    d(x_\lambda,x_0)\le{\phi(x_0)\over a-1}
\end{equation}
and
\begin{equation}      \label{in:EVPloc3}
   \phi(x_\lambda)<\phi(x)+(a-1)d(x,x_\lambda),\quad\forall
   x\in\ball{x_0}{\delta_a}\backslash\{x_\lambda\}.
\end{equation}
If $\phi(x_\lambda)=0$ one gets $x_\lambda\in\Solv{F,C}$ and hence,
by taking into account inequality $(\ref{in:EVPloc2})$ the thesis
is proved.
Suppose that $\phi(x_\lambda)>0$. In such an event, by inequality
$(\ref{in:EVPloc1})$ one has $\phi(x_\lambda)\le\phi(x_0)$ so,
from the fact that $x_0\in\ball{\bar x}{\delta_a}\subseteq\ball{\bar x}{\delta_2}$
one obtains
$$
  \phi(x_\lambda)\le\phi(x_0)\le{\delta_0\over 2}.
$$
On the other hand, since it holds
$$
  d(x_\lambda,\bar x)\le d(x_\lambda,x_0)+d(x_0,\bar x)
  \le{\delta_*\over 2}<\delta_0,
$$
one has $x_\lambda\in\ball{\bar x}{\delta_0}$. The last two facts make
it possible to apply inclusion $(\ref{in:metCincrFbarxdelta0})$, with
$x=x_\lambda$ and $r=\phi(x_\lambda)$, so that one gets the existence
of $u\in\ball{x_\lambda}{\phi(x_\lambda)}$ such that
$$
  \ball{F(u)}{a\phi(x_\lambda)}\subseteq\ball{F(x_\lambda)+C}
  {\phi(x_\lambda)}.
$$
Now notice that, whenever it happens that $a>2$, then by arguing in the same
way as in the proof of Theorem \ref{thm:solgloerbo}, one readily reaches a
contradiction. Thus, it is possible to assume henceforth that $a\in (1,2]$.
If it happens that $d(u,x_0)\le\delta_a$, then inequality $(\ref{in:EVPloc3})$
can be exploited. Thus, by plugging $u$ in this inequality and proceeding
exactly as done for the proof of Theorem \ref{thm:solgloerbo}, one
reaches a contradiction, thereby proving that it must be $\phi(x_\lambda)=0$.
So, in the current case, the thesis is proved.
Otherwise, if it happens that $d(u,x_0)>\delta_a=\delta_*/4$, it is
useful to observe that, on the account of inequalities $(\ref{in:EVPloc1})$
and $(\ref{in:EVPloc2})$, it holds
\begin{eqnarray*}
  d(u,x_0) &\le& d(u,x_\lambda)+d(x_\lambda,x_0)\le \phi(x_0)
  -(a-1)d(x_\lambda,x_0)+d(x_\lambda,x_0) \\
   &=& \phi(x_0)-(a-2)d(x_\lambda,x_0)\le\phi(x_0)-{a-2\over a-1}\phi(x_0)
   ={\phi(x_0)\over a-1}.
\end{eqnarray*}
Consequently, by recalling that $\bar x\in\Solv{F,C}$, one finds
$$
  \dist{x_0}{\Solv{F,C}}\le d(x_0,\bar x)\le{\delta_*\over 4}
  \le{\phi(x_0)\over a-1}.
$$
The last inequality, by arbitrariness of $x_0$, completes the proof.
\end{proof}

\vskip.5cm


\section{Applications to optimization}

\subsection{Existence of ideal efficient solutions}

Let us consider a vector optimization problem $(\VOP)$. As a
straightforward consequence of Theorem \ref{thm:solgloerbo},
the following result concerning the existence of ideal
efficient solutions to $(\VOP)$ and the distance from ideal
efficiency can be established. In its statement, by $\IESol{f}{R}$
the set of all ideal $C$-efficient solutions to $(\VOP)$
is denoted. In view of the employment of Proposition \ref{pro:addpertmetCinc},
the range space $\Y$ is supposed to be a finite-dimensional
Euclidean space, partially ordered by a closed, convex cone
$C$.

\begin{theorem}     \label{thm:vecoptexistie}
With reference to a vector optimization problem $(\VOP)$,
suppose that:

(i) $(X,d)$ is metrically complete;

(ii) $f:X\longrightarrow\R^m$ is continuous on $R$;

(iii) $-f$ is metrically $C$-increasing on $R$;

(iv) sets $R$ and $f(R)$ are closed.

\noindent Then, the set $\IESol{f}{R}$ is nonempty and closed
and it holds
$$
  \dist{x}{\IESol{f}{R}}\le{\exc{f(R)-f(x)}{C}\over \incr{-f}{R}-1},
  \quad\forall x\in R.
$$
\end{theorem}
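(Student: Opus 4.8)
The plan is to reduce the statement to the global result of Theorem~\ref{thm:solgloerbo} through the reformulation recorded in Example~\ref{ex:GEspecexamples}(i). Accordingly, I would introduce the set-valued mapping $\tilde F:R\rightrightarrows\R^m$ given by $\tilde F(x)=f(R)-f(x)$, so that $\IESol{f}{R}=\Solv{\tilde F,C}$, and I would run the entire argument with $R$ (equipped with the metric inherited from $d$) playing the role of the underlying space. Completeness of this space is immediate: $R$ is closed by hypothesis (iv) and $X$ is complete by hypothesis (i), whence $R$ is complete and hypothesis (i) of Theorem~\ref{thm:solgloerbo} holds.

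Next I would check the two structural hypotheses of Theorem~\ref{thm:solgloerbo} for $\tilde F$. Lower semicontinuity is cheap: $\tilde F$ is a continuously varying family of translates of the fixed set $f(R)$, and since two translates of one set satisfy $\Haus{\tilde F(x_1)}{\tilde F(x_2)}\le\|f(x_1)-f(x_2)\|$, the continuity of $f$ on $R$ (hypothesis (ii)) makes $\tilde F$ Hausdorff continuous, hence l.s.c.\ on $R$. For the metric $C$-increase I would call on Proposition~\ref{pro:addpertmetCinc}, writing $\tilde F=(-f)+G$ with $G(x)\equiv f(R)$ the constant mapping. Hypothesis (iii) supplies the metric $C$-increase of $-f$; the constant mapping $G$ is Lipschitz with constant $\beta=0$, which satisfies $\beta<1-1/\incr{-f}{R}$ since $\incr{-f}{R}>1$; and $\tilde F(x)$ is closed as a translate of the closed set $f(R)$ (hypothesis (iv)), so $(-f)+G$ is closed-valued. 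Proposition~\ref{pro:addpertmetCinc} therefore gives that $\tilde F$ is metrically $C$-increasing on $R$ with $\incr{\tilde F}{R}\ge(1-\beta)\incr{-f}{R}=\incr{-f}{R}$.

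With the hypotheses in place, Theorem~\ref{thm:solgloerbo} yields $\Solv{\tilde F,C}\ne\varnothing$, that is $\IESol{f}{R}\ne\varnothing$, together with
$$
  \dist{x}{\IESol{f}{R}}\le\frac{\exc{\tilde F(x)}{C}}{\incr{\tilde F}{R}-1}
  =\frac{\exc{f(R)-f(x)}{C}}{\incr{\tilde F}{R}-1},\quad\forall x\in R.
$$
Since $\incr{\tilde F}{R}\ge\incr{-f}{R}>1$, replacing the denominator by the smaller quantity $\incr{-f}{R}-1$ only enlarges the right-hand side, which produces exactly the claimed estimate. Finally, closedness of $\IESol{f}{R}=\Solv{\tilde F,C}$ follows from Proposition~\ref{pro:clsolset}, $\tilde F$ being l.s.c.\ on $R$, combined with the closedness of $R$ in $X$.

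I expect the only genuinely delicate point to be the metric $C$-increase of $\tilde F$, namely the recognition that adjoining the fixed, and possibly unbounded, set $f(R)$ is precisely an additive perturbation by a zero-Lipschitz mapping, so that Proposition~\ref{pro:addpertmetCinc} applies and, crucially, leaves the increase bound intact (the choice $\beta=0$ incurs no loss). This is where finite-dimensionality of $\R^m$ is genuinely used, since it underlies the enlargement identities of Remark~\ref{rem:enexdist} on which Proposition~\ref{pro:addpertmetCinc} rests; the accompanying closedness bookkeeping for $f(R)$ and for $\tilde F$ is exactly what hypothesis (iv) is there to guarantee.
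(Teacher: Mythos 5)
Your proposal is correct and follows essentially the same route as the paper: the same reformulation $\tilde F(x)=f(R)-f(x)$ on the complete space $R$, the same appeal to Proposition~\ref{pro:addpertmetCinc} with the constant mapping $G\equiv f(R)$ (Lipschitz with $\beta=0$) to transfer metric $C$-increase from $-f$ to $\tilde F$, and the same final applications of Theorem~\ref{thm:solgloerbo} and Proposition~\ref{pro:clsolset}. Your explicit remark that $\incr{\tilde F}{R}\ge\incr{-f}{R}$ lets one pass to the smaller denominator $\incr{-f}{R}-1$ is a detail the paper leaves implicit, but otherwise the arguments coincide.
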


\begin{proof}
As pointed out in Example \ref{ex:GEspecexamples}(i), $\IESol{f}{R}$
coincides with $\Solv{f(R)-f,C}$. In order to apply Theorem
\ref{thm:solgloerbo}, observe that, by virtue of hypotheses (ii) and (iv),
the set-valued mapping $F:R\rightrightarrows\R^m$, defined by
$F(x)=f(R)-f(x)$, is closed-valued and l.s.c. on $R$. This fact
enables one to invoke Proposition \ref{pro:clsolset}, which says
that $\IESol{f}{R}$ is closed. Since $-f$ is metrically $C$-increasing
on $R$ and the mapping $G:R\rightrightarrows\R^m$, given by $G\equiv
f(R)$ is evidently Lipschitz continuous with constant $\beta=0$,
Proposition \ref{pro:addpertmetCinc} ensures that $F$, as an
additive perturbation of $-f$, is still metrically $C$-increasing
on $R$, with
$$
  \incr{F}{R}\ge\incr{-f}{R}.
$$
As a closed subset of a complete metric space, $(R,d)$ is a complete
metric space.
Then, it is possible to apply Theorem \ref{thm:solgloerbo}, from which
all the remaining assertions in the thesis follow.
\end{proof}

As a comment to Theorem \ref{thm:vecoptexistie}, let us remark
that, in contrast with the most part of existence results in vector
optimization (see \cite{Jahn04,Luc89}), no kind of compactness
is assumed on $f(R)$. The existence of ideal efficient solutions
comes as a consequence of the interplay between the metric behaviour
of $f$ and the partial order $C$, and metric completeness.


\subsection{Penalization in constrained optimization}

This subsection deals with constrained (scalar) optimization
problems, whose constraint system is defined by a generalized
equation of the form $(\GE)$, namely
$$
  \min\varphi(x) \qquad\hbox{ subject to }\qquad
  F(x)\subseteq C.   \leqno  (\mathcal{P})
$$
The objective function $\varphi:X\longrightarrow\R$ is assumed
to be locally Lipschitz at $\bar x$, i.e. with the property that
there exist $\beta\ge 0$ and $\delta>0$ such that
\begin{equation}   \label{in:deflocLipvarphi}
  |\varphi(x_1)-\varphi(x_2)|\le\beta d(x_1,x_2),\quad\forall
  x\in\ball{\bar x}{\delta}.
\end{equation}
The exact local Lipschitz bound of $\varphi$ at $\bar x$ is denoted
in what follows by
$$
  \lip{\varphi}{\bar x}=\inf\{\beta\ge 0:\ \exists\delta>0\hbox{ such
  that inclusion $(\ref{in:deflocLipvarphi})$ holds}\}.
$$
Let us indicate by $\mathcal{R}$ the feasible region of $(\mathcal{P})$,
that is $\mathcal{R}=\Solv{(F,C)}$.

According to a widely-exploited scheme of analysis (see, for instance,
\cite[Ch. 6]{FacPan03}), whenever an error
bound for a given constraint system is established, a corresponding
exact penalization result is derived in a standard way. It is
worth noting that penalty method is an approach developed within
constrained optimization after \cite{Erem67,Zang67}, which consists
in reducing a given constrained optimization problem to an
unconstrained one, by replacing its objective function with a
so-called penalty functional. In the case of problem $(\mathcal{P})$,
the penalty functional is defined as
$$
   \varphi_\lambda(x)=\varphi(x)+\lambda\exc{F(x)}{C},
$$
where $\lambda$ is a positive parameter and the additional term
clearly quantifies the constraint violation.
In this context, a penalty function $\varphi_\lambda$ is said
to be exact at a local solution $\bar x\in\mathcal{R}$ to
$(\mathcal{P})$ provided that $\bar x$ is also a local solution
to the following unconstrained problem
$$
   \min_{x\in X}\varphi_\lambda(x).
   \leqno  (\mathcal{P}_\lambda)
$$
The next result provides a sufficient condition for the
exactness of a penalty function, along with a quantitative
estimate, expressed in term of problem data, for a penalty
parameter $\lambda$ to given an exact penalty function.

\begin{theorem}[Exact penalization]     \label{thm:exactpen}
Given a constrained optimization problem $(\mathcal{P})$,
suppose that:

(i) $(X,d)$ is metrically complete;

(ii) $\bar x\in\mathcal{R}$ is a local solution to $(\mathcal{P})$;

(iii) $\varphi$ is locally Lipschitz at $\bar x$;

(iv) $F$ is l.s.c. in a neighbourhood of $\bar x$ and Hausdorff
$C$-u.s.c. at $\bar x$;

(v) $F$ is metrically $C$-increasing around $\bar x$.

\noindent Then, for every $\lambda>{\lip{\varphi}{\bar x}\over\incr{F}{\bar x}-1}$,
the function $\varphi_\lambda$ is exact at $\bar x$.
\end{theorem}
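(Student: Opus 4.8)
The plan is to follow the standard two-step route: first invoke the local error bound of Theorem~\ref{thm:sollocerbo} for the constraint system $F(x)\subseteq C$, and then convert it into exactness of the penalty function by playing the local Lipschitz behaviour of $\varphi$ against the local optimality of $\bar x$. Since $\bar x\in\mathcal{R}=\Solv{F,C}$ and hypotheses (i), (iv), (v) coincide with the assumptions of Theorem~\ref{thm:sollocerbo}, that result applies and yields, for every $a\in(1,\incr{F}{\bar x})$, a radius $\delta_a>0$ with
$$
  \dist{x}{\mathcal{R}}\le\frac{\exc{F(x)}{C}}{a-1},\quad\forall x\in\ball{\bar x}{\delta_a}.
$$
First I would fix the constants. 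Given $\lambda>\lip{\varphi}{\bar x}/(\incr{F}{\bar x}-1)$, I can choose $a\in(1,\incr{F}{\bar x})$ close enough to $\incr{F}{\bar x}$ and a Lipschitz constant $\beta>\lip{\varphi}{\bar x}$ so small that $\beta<\lambda(a-1)$; this is feasible precisely because $\lambda(\incr{F}{\bar x}-1)>\lip{\varphi}{\bar x}$. For this $\beta$ there is $\delta''>0$ on which $\varphi$ is $\beta$-Lipschitz, and by (ii) there is $\delta'>0$ such that $\varphi(\bar x)\le\varphi(y)$ for all $y\in\mathcal{R}\cap\ball{\bar x}{\delta'}$. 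I would then set $\delta=\tfrac12\min\{\delta_a,\delta',\delta''\}$ and check that the argument below stays inside all the relevant neighbourhoods.

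The heart of the proof is the chaining of inequalities. Note first that $\varphi_\lambda(\bar x)=\varphi(\bar x)$, since $\bar x\in\mathcal{R}$ forces $\exc{F(\bar x)}{C}=0$. Fix $x\in\ball{\bar x}{\delta}$ and, for an arbitrary $\epsilon>0$, pick an approximate projection $x^*\in\mathcal{R}$ with $d(x,x^*)\le\dist{x}{\mathcal{R}}+\epsilon$; such a point exists by the very definition of the infimum, $\mathcal{R}$ being nonempty. Since $\bar x\in\mathcal{R}$ gives $\dist{x}{\mathcal{R}}\le d(x,\bar x)\le\delta$, one has $d(\bar x,x^*)\le 2\delta+\epsilon$, so for small $\epsilon$ the point $x^*$ lies in both $\ball{\bar x}{\delta'}$ and $\ball{\bar x}{\delta''}$. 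Using local optimality, then the Lipschitz estimate, then the error bound, I would write
$$
  \varphi(\bar x)\le\varphi(x^*)\le\varphi(x)+\beta d(x,x^*)\le\varphi(x)+\frac{\beta}{a-1}\exc{F(x)}{C}+\beta\epsilon.
$$
Since $\beta/(a-1)<\lambda$, the middle term is dominated by $\lambda\exc{F(x)}{C}$, whence $\varphi_\lambda(\bar x)=\varphi(\bar x)\le\varphi_\lambda(x)+\beta\epsilon$. Letting $\epsilon\to0^+$ gives $\varphi_\lambda(\bar x)\le\varphi_\lambda(x)$ on $\ball{\bar x}{\delta}$, i.e. $\bar x$ solves $(\mathcal{P}_\lambda)$ locally and $\varphi_\lambda$ is exact at $\bar x$.

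The routine parts are the Ekeland-based error bound (already available as Theorem~\ref{thm:sollocerbo}) and the telescoping inequality. The delicate point, deserving the most care, is the bookkeeping of the several radii: the error bound holds only on $\ball{\bar x}{\delta_a}$, local optimality only against feasible points in $\ball{\bar x}{\delta'}$, and the Lipschitz estimate only on $\ball{\bar x}{\delta''}$, yet the approximate projection $x^*$ is produced by the argument rather than chosen in advance. Controlling $d(\bar x,x^*)$ by $2\delta+\epsilon$ --- which rests on the elementary but essential observation that $\dist{x}{\mathcal{R}}\le d(x,\bar x)$ because $\bar x$ itself is feasible --- is exactly what keeps $x^*$ inside all three neighbourhoods and makes the chain legitimate. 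Equally, the gap between $\lip{\varphi}{\bar x}$ and $\lambda(\incr{F}{\bar x}-1)$ must be exploited to select $a$ and $\beta$ simultaneously; this is the only place where the precise threshold on $\lambda$ in the statement enters.
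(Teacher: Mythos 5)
Your proposal is correct and follows essentially the same route as the paper's own proof: invoke the local error bound of Theorem~\ref{thm:sollocerbo}, take an approximate projection of $x$ onto $\mathcal{R}$, and chain the local optimality of $\bar x$, the Lipschitz estimate for $\varphi$ and the error bound, exploiting the slack between $\lip{\varphi}{\bar x}$ and $\lambda(\incr{F}{\bar x}-1)$ to fit $\beta/(a-1)$ under $\lambda$. The one bookkeeping slip is the choice $\delta=\tfrac12\min\{\delta_a,\delta',\delta''\}$: the bound $d(\bar x,x^*)\le 2\delta+\epsilon$ then equals the minimum of the three radii plus $\epsilon$, which can exceed $\delta'$ or $\delta''$ when that radius is the smallest, so you should shrink $\delta$ to a third or a quarter of the minimum (the paper uses one quarter) to guarantee that $x^*$ lands in all the required balls.
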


\begin{proof}
Fix $\lambda>{\lip{\varphi}{\bar x}\over\incr{F}{\bar x}-1}$. Then, it is
possible to pick constants $a$, $\tilde a$ and $\beta$, with
$$
  \beta>\lip{\varphi}{\bar x} \quad\hbox{ and }\quad
  1<\tilde a <a <\incr{F}{\bar x},
$$
in such a way that
\begin{equation}    \label{in:lambdatildeaalip}
   \lambda>{\beta\over\tilde a-1}>{\beta\over a-1}>
   {\lip{\varphi}{\bar x}\over\incr{F}{\bar x}-1}.
\end{equation}
As a consequence of hypothesis (ii), there exists $\delta_0$
such that
\begin{equation}     \label{in:locoptimbarx}
   \varphi(x)\ge\varphi(\bar x),\quad\forall x\in
   \ball{\bar x}{\delta_0}\cap\mathcal{R}.
\end{equation}
By virtue of hypothesis (iii), there exists $\delta_\beta>0$
such that
\begin{equation}     \label{in:loclipvarphibarx}
  |\varphi(x_1)-\varphi(x_2)|\le\beta d(x_1,x_2),\quad\forall
  x\in\ball{\bar x}{\delta_\beta}.
\end{equation}
Since, under the hypotheses (i), (iv) and (iv) it is possible to apply
Theorem \ref{thm:sollocerbo}, there exists $\delta_a>0$ such that
\begin{equation}    \label{in:locerboadeltaa}
    \dist{x}{\mathcal{R}}\le{\exc{F(x)}{C}\over a-1},\quad
    \forall x\in\ball{\bar x}{\delta_a}.
\end{equation}
Now, define
\begin{equation}    \label{eq:defr4delta}
   r_*={1\over 4}\min\{\delta_0,\, \delta_\beta,\, \delta_a\}.
\end{equation}
According to the definition of $\varphi_\lambda$, clearly it is
$$
  \varphi_\lambda(x)=\varphi(x)\ge\varphi(\bar x)=\varphi_\lambda(\bar x),
  \quad\forall x\in\ball{\bar x}{r_*}\cap\mathcal{R}.
$$
It remains to prove that $\varphi_\lambda$ fulfils a similar
inequality also for every $x\in\ball{\bar x}{r_*}\backslash
\mathcal{R}$. To this aim, fix an arbitrary $x\in
\ball{\bar x}{r_*}\backslash\mathcal{R}$. Since $r_*<\delta_a$, inequality
$(\ref{in:locerboadeltaa})$ is valid at each point of $\ball{\bar x}{r_*}$
and hence
$$
  \dist{x}{\mathcal{R}}<{\exc{F(x)}{C}\over \tilde a-1}.
$$
Remember that, as $x\not\in\mathcal{R}$, it must be
$\exc{F(x)}{C}>0$). Moreover, under the aforementioned hypothesis
of lower semicontinuity on $F$, in the light of Proposition
\ref{pro:clsolset} $\mathcal{R}$ turns out to be locally closed around $\bar x$
and hence one has also $\dist{x}{\mathcal{R}}>0$.
These facts imply the existence of $z\in\mathcal{R}$
such that
\begin{equation}     \label{in:distxz2}
  d(x,z)<2\dist{x}{\mathcal{R}}
\end{equation}
and
\begin{equation}     \label{in:exctildea}
  d(x,z)<{\exc{F(x)}{C}\over \tilde a-1}.
\end{equation}
Notice that, since it is $\bar x\in\mathcal{R}$, then by inequality
$(\ref{in:distxz2})$ one gets
$$
d(x,z)<2d(x,\bar x)\le 2r_*.
$$
Owing to the definition of $r_*$ made in $(\ref{eq:defr4delta})$, it follows
$$
  d(z,\bar x)\le d(z,x)+d(x,\bar x)\le 3r_*<\min
  \{\delta_0,\, \delta_\beta,\, \delta_a\}.
$$
As a consequence, one obtains
$$
  z\in\mathcal{R}\cap\ball{\bar x}{\delta_0}\cap\ball{\bar x}{\delta_\beta}.
$$
This fact enables one to apply inequalities $(\ref{in:locoptimbarx})$ and
$(\ref{in:loclipvarphibarx})$. Thus, by recalling inequality $(\ref{in:exctildea})$,
one finds
$$
  \varphi(x)\ge\varphi(z)-\beta d(x,z)\ge\varphi(\bar x)-\beta
  {\exc{F(x)}{C}\over \tilde a-1},
$$
wherefrom, in the light of inequality $(\ref{in:lambdatildeaalip})$, one
deduces
$$
   \varphi_\lambda(x)\ge\varphi(\bar x)=\varphi_\lambda(\bar x).
$$
The last inequality completes the proof.
\end{proof}

Theorem \ref{thm:exactpen} can be used as a starting point for deriving
necessary optimality conditions, when problem $(\mathcal{P})$ is considered
in more structured settings. For instance, if $F:\X\rightrightarrows\Y$ is
a set-valued mapping between normed spaces with the property
$$
  F(tx_1+(1-t)x_2)\subseteq tF(x_1)+(1-t)F(x_2),\quad\forall
  x_1,\, x_2\in\X,\ \forall t\in [0,1]
$$
(this happens, for example, whenever $F$ is defined by a set of linear operators),
then the function $x\mapsto\exc{F(x)}{C}$ turns out to be convex.
Thus, under the assumptions of Theorem \ref{thm:exactpen}, a necessary
optimality condition can be readily expressed in terms of the Clarke subdifferential
(see \cite[Ch. 10]{Clar13}).
In other cases, to derive a verifiable necessary optimality condition,
it may be useful to handle the term $x\mapsto\exc{F(x)}{C}$ by other kinds of
subdifferential (sometimes more elaborated), which are available in nonsmooth analysis
(see \cite{BorZhu05,Mord06,Shir07,Peno13}). It is worth mentioning that
some of them may perform better than the Clarke subdifferential, even in the
Lipschitz case, inasmuch as they lead to smaller constructions in
the dual space.

Results of exact penalization are often complemented with conditions upon which
from the global strict optimality of an element for a penalized problem
one gets its (global) optimality for the
original problem. A similar result can be achieved also
in the context of problem $(\mathcal{P})$, by employing this time
the global error bound estimate for generalized equations $(\GE)$.

\begin{proposition}
With reference to a problem $(\mathcal{P})$, suppose that:

(i) $(X,d)$ is metrically complete;

(ii) $(\mathcal{P})$ admits global solutions;

(iii) $\varphi$ is Lipschitz continuous on $X$;

(iv) $F$ is l.s.c. on $X$;

(v) $F$ is metrically $C$-increasing on $X$.

\noindent If $\hat x\in X$  is a strict global solution to problem
$(\mathcal{P}_{\lambda_\epsilon})$, with
$$
  \lambda_\epsilon={(1+\epsilon)\beta\over\incr{F}{X}-1},
$$
for some $\epsilon>0$, then $\hat x$ is a global solution also
to $(\mathcal{P})$.
\end{proposition}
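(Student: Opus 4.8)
The plan is to argue in two stages: first establish that any global minimizer $\hat x$ of the penalized functional $\varphi_{\lambda_\epsilon}$ is necessarily feasible, i.e. $\hat x\in\mathcal{R}$, and then deduce from feasibility alone that $\hat x$ solves $(\mathcal{P})$. Write $\beta$ for the Lipschitz constant of $\varphi$ on $X$ entering the definition of $\lambda_\epsilon$. Two preliminary facts are at hand: since $F$ is l.s.c. on $X$, Proposition \ref{pro:clsolset} makes $\mathcal{R}=\Solv{F,C}$ closed; and since hypotheses (i), (iv), (v) are precisely those of Theorem \ref{thm:solgloerbo}, that theorem supplies $\mathcal{R}\ne\varnothing$ together with the global error bound
$$
  \dist{x}{\mathcal{R}}\le{\exc{F(x)}{C}\over\incr{F}{X}-1},\qquad\forall x\in X.
$$

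For the feasibility stage I would suppose, towards a contradiction, that $\hat x\notin\mathcal{R}$, so that $\exc{F(\hat x)}{C}>0$ and, by closedness of $\mathcal{R}$, also $\dist{\hat x}{\mathcal{R}}>0$. The role of the margin factor $(1+\epsilon)$ in $\lambda_\epsilon$ is to upgrade the error bound to a strict inequality. Combining the definition of $\lambda_\epsilon$ with the bound gives
$$
  \lambda_\epsilon\exc{F(\hat x)}{C}=(1+\epsilon)\,{\beta\,\exc{F(\hat x)}{C}\over\incr{F}{X}-1}\ge(1+\epsilon)\beta\,\dist{\hat x}{\mathcal{R}},
$$
and, because $\dist{\hat x}{\mathcal{R}}>0$, one may select $z\in\mathcal{R}$ with $d(\hat x,z)<(1+\epsilon)\dist{\hat x}{\mathcal{R}}$. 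Such a $z$ satisfies $z\ne\hat x$ and $\exc{F(z)}{C}=0$, so $\varphi_{\lambda_\epsilon}(z)=\varphi(z)$. Using $\varphi(\hat x)\ge\varphi(z)-\beta d(\hat x,z)$ together with the two displayed inequalities, I would conclude
$$
  \varphi_{\lambda_\epsilon}(\hat x)=\varphi(\hat x)+\lambda_\epsilon\exc{F(\hat x)}{C}\ge\varphi(z)-\beta d(\hat x,z)+\lambda_\epsilon\exc{F(\hat x)}{C}>\varphi(z)=\varphi_{\lambda_\epsilon}(z),
$$
which contradicts the global minimality of $\hat x$ for $\varphi_{\lambda_\epsilon}$. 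Hence $\hat x\in\mathcal{R}$.

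The optimality stage is then immediate. Feasibility gives $\varphi_{\lambda_\epsilon}(\hat x)=\varphi(\hat x)$, while by hypothesis (ii) there exists a global solution $x_*$ of $(\mathcal{P})$, which lies in $\mathcal{R}$ and hence satisfies $\varphi_{\lambda_\epsilon}(x_*)=\varphi(x_*)$. Global minimality of $\hat x$ for $(\mathcal{P}_{\lambda_\epsilon})$ yields $\varphi(\hat x)\le\varphi(x_*)$, while the constrained optimality of $x_*$, applied to $\hat x\in\mathcal{R}$, yields the reverse inequality $\varphi(x_*)\le\varphi(\hat x)$. Therefore $\varphi(\hat x)=\varphi(x_*)$, so $\hat x$ is a global solution of $(\mathcal{P})$, as claimed.

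I expect the feasibility stage to be the only delicate point. Its crux is the possibility of choosing $z$ realizing the strict inequality $d(\hat x,z)<(1+\epsilon)\dist{\hat x}{\mathcal{R}}$, which relies precisely on the closedness of $\mathcal{R}$ (ensuring $\dist{\hat x}{\mathcal{R}}>0$ once $\hat x$ is infeasible) and on the genuine slack $\epsilon>0$ built into the threshold for $\lambda$: with $\epsilon=0$ one would recover only a non-strict inequality and the contradiction would collapse. Everything else is the routine assembly of the error bound and the Lipschitz estimate; I note in passing that the argument uses only ordinary global minimality of $\hat x$, strictness not being needed.
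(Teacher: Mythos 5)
Your proof is correct and follows the same overall strategy as the paper's: assume $\hat x\notin\mathcal{R}$, invoke the global error bound of Theorem \ref{thm:solgloerbo}, pick a nearby feasible point, and compare penalized values via the Lipschitz estimate. But you close the contradiction differently, and in a way that buys something. The paper selects $x_\epsilon\in\mathcal{R}$ with $d(x_\epsilon,\hat x)\le(1+\epsilon)\exc{F(\hat x)}{C}/(\incr{F}{X}-1)$, obtains only the non-strict inequality $\varphi_{\lambda_\epsilon}(\hat x)\ge\varphi_{\lambda_\epsilon}(x_\epsilon)$, sandwiches the whole chain between $\varphi(\bar x)$ and $\varphi(\bar x)$ using a global solution $\bar x$ supplied by hypothesis (ii), and then must invoke the \emph{strictness} of $\hat x$ to force $\hat x=x_\epsilon$ and reach the contradiction. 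You instead use $\dist{\hat x}{\mathcal{R}}>0$ (via Proposition \ref{pro:clsolset}) to put the multiplicative slack $(1+\epsilon)$ on the distance itself, choose $z$ with $d(\hat x,z)<(1+\epsilon)\dist{\hat x}{\mathcal{R}}$, and obtain the strict inequality $\varphi_{\lambda_\epsilon}(\hat x)>\varphi_{\lambda_\epsilon}(z)$; this contradicts ordinary global minimality, so your argument in fact proves the stronger statement with ``strict'' dropped, and it uses hypothesis (ii) only in the second stage, where it is anyway dispensable (once $\hat x\in\mathcal{R}$, every $x\in\mathcal{R}$ satisfies $\varphi(x)=\varphi_{\lambda_\epsilon}(x)\ge\varphi_{\lambda_\epsilon}(\hat x)=\varphi(\hat x)$). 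One caveat: your strict inequality $\beta\, d(\hat x,z)<(1+\epsilon)\beta\dist{\hat x}{\mathcal{R}}$ requires $\beta>0$. If $\beta=0$ then $\varphi$ is constant and $\lambda_\epsilon=0$, and the strictness-free version of the proposition is genuinely false (any infeasible point is a non-strict global minimizer of $\varphi_{\lambda_\epsilon}\equiv\varphi$), so in that degenerate case you must fall back on the strictness hypothesis, as the paper's proof does; for non-constant $\varphi$ your argument goes through as written.
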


\begin{proof}
Once proved that $\hat x\in\mathcal{R}$ the thesis follows at once, so
assume, ab absurdo, that $\hat x\not\in\mathcal{R}$. Under the above hypotheses
it is possible to apply Theorem \ref{thm:solgloerbo} and, in
particular, the error bound estimate in its thesis. This amounts to
say that, corresponding to $\epsilon>0$, there exists $x_\epsilon
\in\mathcal{R}$ such that
\begin{equation}     \label{in:epsilongloerbohatx}
    d(x_\epsilon,\hat x)\le {(1+\epsilon)\exc{F(\hat x)}{C}\over
    \incr{F}{X}-1}.
\end{equation}
According to hypothesis (ii), let $\bar x\in\mathcal{R}$ be a global
solution to $(\mathcal{P})$. From the global optimality of $\hat x$
for $(\mathcal{P}_{\lambda_\epsilon})$, inequality $(\ref{in:epsilongloerbohatx})$
and hypthesis (iii), it follows
\begin{eqnarray*}
   \varphi(\bar x) &=&\varphi_{\lambda_\epsilon}(\bar x)\ge
   \varphi_{\lambda_\epsilon}(\hat x)=\varphi(\hat x)+\lambda_\epsilon
   \exc{F(\hat x)}{C}  \\
   &\ge& \varphi(x_\epsilon)-\beta d(\hat x,x_\epsilon)+
   \lambda_\epsilon\exc{F(\hat x)}{C}\ge\varphi(x_\epsilon) \\
   &\ge& \varphi(\bar x).
\end{eqnarray*}
The above inequalities imply that $\varphi_{\lambda_\epsilon}(\hat x)=
\varphi_{\lambda_\epsilon}(x_\epsilon)$. Since $\hat x$ has been supposed
to be a strict global solution to $(\mathcal{P}_{\lambda_\epsilon})$, the
last equality forces $\hat x=x_\epsilon$, but this contradicts the fact
that $\hat x\not\in\mathcal{R}$. The proof is complete.
\end{proof}

\vskip1cm

\noindent {\bf Acknowledgements} The author thanks the anonymous referees
for valuable comments.



\end{document}